\documentclass[11pt]{article}

\usepackage[utf8]{inputenc}

\usepackage[a4paper,margin=1.05in]{geometry}
\usepackage{amsmath,amssymb,amsthm,mathtools,mathrsfs}
\usepackage{microtype}
\usepackage{aliascnt}
\usepackage{hyperref}
\usepackage[nameinlink,noabbrev]{cleveref}
\usepackage[svgnames]{xcolor}
\usepackage{authblk}

\hypersetup{
  unicode=true,
  colorlinks=true,
  linkcolor=blue,
  citecolor=blue,
  urlcolor=blue,
  pdftitle={Stability and exponential convergence of Sinkhorn algorithm for entropy martingale optimal transport}
}

\theoremstyle{plain}

\newtheorem{theorem}{Theorem}[section]

\newaliascnt{lemma}{theorem}
\newtheorem{lemma}[lemma]{Lemma}
\aliascntresetthe{lemma}

\newaliascnt{proposition}{theorem}
\newtheorem{proposition}[proposition]{Proposition}
\aliascntresetthe{proposition}

\newaliascnt{corollary}{theorem}
\newtheorem{corollary}[corollary]{Corollary}
\aliascntresetthe{corollary}

\newaliascnt{assumption}{theorem}
\newtheorem{assumption}[assumption]{Assumption}
\aliascntresetthe{assumption}

\theoremstyle{definition}

\newaliascnt{definition}{theorem}
\newtheorem{definition}[definition]{Definition}
\aliascntresetthe{definition}

\theoremstyle{remark}

\newaliascnt{remark}{theorem}
\newtheorem{remark}[remark]{Remark}
\aliascntresetthe{remark}

\crefname{theorem}{Theorem}{Theorems}
\crefname{lemma}{Lemma}{Lemmas}
\crefname{proposition}{Proposition}{Propositions}
\crefname{corollary}{Corollary}{Corollaries}
\crefname{assumption}{Assumption}{Assumptions}
\crefname{definition}{Definition}{Definitions}
\crefname{remark}{Remark}{Remarks}
\crefname{equation}{Equation}{Equations}
\crefname{section}{Section}{Sections}

\title{Exponential convergence of Sinkhorn algorithm for entropy martingale optimal transport}
\author[1]{Anna Kazeykina}
\author[2]{Zhenjie Ren}
\author[1]{Hecheng Wang}
\affil[1]{LMO, Université Paris-Saclay}
\affil[2]{LaMME, Université \'Evry Paris-Saclay}
\date{}

\begin{document}

\maketitle

\begin{abstract}
We prove the exponential convergence in relative entropy of the Sinkhorn algorithm for the entropy martingale optimal transport.
We assume that the marginals have compact supports and are in strict convex order, the terminal
marginal support is convex, and the initial marginal support lies in the relative
interior of the terminal marginal support; the reference cost is assumed to be Lipschitz in each variable. Under these assumptions we
establish uniform bounds on the dual variables modulo affine gauges. This result allows us to obtain the existence and uniqueness of the optimizer, and its exponential representation in terms of dual variables. 
We then establish a relative-entropy
stability estimate for martingale couplings with different terminal marginals. Proving that the constant in the stability estimate stays uniform throughout the Sinkhorn iteration allows us to establish the exponential convergence of the Sinkhorn algorithm.
\end{abstract}

\tableofcontents

\section{Introduction}
The martingale optimal transport (MOT) problem was introduced in \cite{BeiglbockHenryLaborderePenkner2013ModelIndependentBounds, GalichonHenryLabordereTouzi2014StochasticControl}. It arises naturally in the calibration of stochastic volatility models under martingale constraints, which reflect no-arbitrage pricing conditions under the risk-neutral measure.

The stability problem is of particular interest for MOT because the marginal measures arising in financial applications are often noisy. This problem was studied in several works \cite{Wiesel2023ContinuityMOTRealLine, BackhoffVeraguasPammer2022StabilityMOTWeakOT}; in particular, \cite{Wiesel2023ContinuityMOTRealLine} proved value stability of the MOT problem with respect to the marginals, while the authors of \cite{BackhoffVeraguasPammer2022StabilityMOTWeakOT} proved stability of the optimal couplings with respect to the marginals and the cost in the weak topology.

Several numerical approaches were proposed for solving the martingale optimal transport problem. The paper \cite{GuoObloj2019ComputationalMOT} shows that the MOT problem can be approximated through a sequence of linear
programming (LP) problems which result from a discretization of the
marginal distributions combined with a relaxation of the martingale constraint; see also a continuation of this work in \cite{EcksteinGuoLimObloj2021RobustPricingMultipleAssets}, where deep neural-network optimization is incorporated into the dual problem. The authors of \cite{HiewNennaPass2025ODEEntropicOT} propose an ODE-based approach to solve multi-marginal entropic optimal transport problems with additional linear constraints, including martingale constraints.

The connection between entropy martingale optimal transport (EMOT) and
nonlinear pricing--hedging duality is developed in
\cite{DoldiFrittelli2023EMOT,DoldiFrittelliRosazzaGianin2024EntropyMOTTheory}.
These works incorporate martingale constraints into entropy optimal
transport and study the resulting nonlinear duality in mathematical
finance.
In \cite{CarlierMalamutSylvestre2025WeakOTMomentConstraints}, EMOT is put into a larger context of weak optimal transport problems with moment constraints, and the authors obtain general dual attainment results and study the convergence of the entropic regularization scheme; see also \cite{AlfonsiCoyaudEhrlacherLombardi2021MomentConstraintsOT} for approximation results and algorithms for moment-constrained OT.

The EMOT problem also appears under the name of the martingale Schrödinger bridge problem; see \cite{HenryLabordere2019MartingaleSchrodingerBridges, NutzWieselZhao2023MartingaleSchrodingerBridges, Guyon2024DispersionConstrainedMartingaleSchrodinger} for applications in finance and \cite{NutzWiesel2024MartingaleSchrodingerBridgeTwoDistributions} for a mathematical treatment.
In the one-dimensional setting with constant reference cost, \cite{NutzWiesel2024MartingaleSchrodingerBridgeTwoDistributions} proves existence and uniqueness of the optimizer and establishes its exponential representation in terms of three dual variables.
(Note that in the present paper we obtain an extension of these results to multiple dimensions and regular reference costs, but under a more restrictive assumption of compactness of marginal supports.)
Finally, see \cite{TanTouzi2013OptimalTransportationControlledDynamics, BenamouChazareixHoffmannLoeperVialard2024EntropicSemiMartingaleOT} for extensions to the semimartingale case and related numerical algorithms, including Sinkhorn algorithms.

Sinkhorn-type algorithms for EMOT are studied in
\cite{DeMarch2018EntropicApproximationMOT,DeMarchHenryLabordere2019ArbitrageFreeImpliedVolatility}.
The authors of~\cite{eckstein2025exponential} establish exponential
convergence for general iterative proportional fitting procedures using
strong convexity of the dual problem and geometric conditions on the
linear spaces defining the constraints. Note that the result of \cite{eckstein2025exponential} is proved under the assumption that the log-densities of the Sinkhorn iterates with respect to a given reference measure are uniformly bounded. One of the contributions of the present paper is the proof of this uniform boundedness property in the EMOT setting.

The recent paper \cite{TangShavlovskyRahmanianXiaoYing2025EntropicOTMartingaleType} considers discrete EMOT and studies Sinkhorn-type algorithms with sparse Newton iterations that utilize the approximate sparsity of the Hessian matrix of the dual objective. Numerical experiments indicate that the proposed algorithms converge rapidly and are robust with respect to total constraint violations.

The present work is primarily inspired by \cite{ChenConfortiRenWang2026SinkhornEMOT}. In that paper, the authors study the entropic regularization of the MOT problem and establish its dual formulation. They further prove exponential convergence of the associated Sinkhorn algorithm in terms of the dual variables. The goal of the present paper is to extend the approach of \cite{ChenConfortiRenWang2026SinkhornEMOT} beyond the one-dimensional case and to study convergence of the corresponding coupling iterates. We prove exponential convergence in relative entropy of the couplings of the martingale Sinkhorn iteration towards the optimal martingale coupling (Theorem \ref{thm:exponential-halfstep}). The strategy of the proof of exponential convergence via a stability estimate is inspired by \cite{chiarini2024semiconcavity}.

We consider the setting in which the marginals have compact supports and are in strict convex order, the terminal marginal support is convex,
and the initial support is contained in its relative interior. The reference cost is assumed to be Lipschitz in
each variable. We first prove
uniform bounds, up to affine gauge transformations, for the Sinkhorn
potentials. The key convexity argument uses the fact that each updated
terminal potential differs by a uniformly bounded function from a convex
log-partition function. 
A gauge-invariant decreasing functional then gives marginal convergence. Compactness of the gauged Sinkhorn potentials
produces the EMOT optimizer and its exponential representation in terms of Lipschitz dual potentials
(\cref{thm:optimizer_representation}).

We next prove a relative-entropy stability estimate for pairs of
martingale couplings (\cref{thm:stability}). Its hypotheses are a positive
lower bound on the density of one coupling relative to the product of
its marginals and boundedness of the logarithmic density of one coupling with respect to the other.
Cancellation of the initial-marginal and martingale terms reduces the
symmetrized entropy to the pairing of a terminal potential with the
terminal-marginal difference. An elementary estimate modulo affine
functions then bounds this entropy by the relative entropy of the
terminal marginals. The preceding uniform gauge bounds for dual potentials allow to prove that the conditions of the stability result are verified  along the Sinkhorn iteration. Combining stability with the
exact Sinkhorn half-step entropy decrement gives exponential convergence of the
martingale coupling iterates.

The paper is organised as follows. Section~2 introduces the notation,
assumptions, and states the main results. Section~3 collects the general entropy identities and an optimality result for
a martingale coupling admitting the exponential representation. Section~4 establishes well-posedness of the martingale Sinkhorn iteration, uniform
affine-gauge bounds for Sinkhorn potentials, marginal convergence, and existence of the optimizer. Section~5 proves the entropy stability estimate under
self-contained hypotheses. Section~6 verifies these hypotheses uniformly
along the iteration and derives the exponential convergence rate.

\section{Main results}\label{sec:main-result}

In this section we introduce the EMOT problem and the martingale
Sinkhorn iteration, then state the assumptions and main results:
uniform bounds on the terminal potentials modulo affine functions,
marginal convergence, existence of the optimizer, and exponential
convergence of the martingale half-steps
\cref{def:intermediate-measures}.

Let $\mathcal{P}(\mathbb{R}^d)$ denote the set of Borel probability measures on $\mathbb{R}^d$ for $d \geq 1$, and let
\[
\mathcal{P}_1(\mathbb{R}^d)
:=
\left\{
\mu\in\mathcal{P}(\mathbb{R}^d):
\int_{\mathbb{R}^d}|x|\,\mu(\mathrm{d}x)<\infty
\right\}.
\]

For $\alpha,\beta\in\mathcal{P}(\mathbb{R}^d)$, let $\Pi(\alpha,\beta)$ denote the set of couplings of $\alpha$ and $\beta$, that is,
\[
\Pi(\alpha,\beta)
:=
\left\{
\pi\in\mathcal{P}(\mathbb{R}^d\times\mathbb{R}^d):
\pi_{\# x}=\alpha,\;
\pi_{\# y}=\beta
\right\},
\]
where $\pi_{\# x}$ and $\pi_{\# y}$ denote the first and second marginals of $\pi$, respectively.
For $\alpha,\beta\in\mathcal{P}_1(\mathbb{R}^d)$, let $\mathcal{M}(\alpha,\beta)$ be the set of
martingale couplings in $\Pi(\alpha,\beta)$, namely
\[
\mathcal{M}(\alpha,\beta)
:=
\left\{
\begin{array}{l}
\pi\in\Pi(\alpha,\beta):\\[0.2ex]
\displaystyle \int \zeta(x)\cdot(y-x)\,\pi(\mathrm{d}x,\mathrm{d}y)=0\\[0.2ex]
\text{for every bounded measurable }\zeta:\mathbb{R}^d\to\mathbb{R}^d
\end{array}
\right\},
\]
where we write
$a\cdot b$ for the Euclidean inner product of $a,b\in\mathbb{R}^d$.

Let $\mu,\nu\in\mathcal{P}_1(\mathbb{R}^d)$. We write $\mu \preceq_{\mathrm{cx}} \nu$ if
\[
\int_{\mathbb{R}^d} f\,\mathrm{d}\mu
\le
\int_{\mathbb{R}^d} f\,\mathrm{d}\nu
\]
for every convex function $f:\mathbb{R}^d\to\mathbb{R}$ for which the two integrals are finite. By Strassen's theorem~\cite{Strassen1965}, for $\mu,\nu\in\mathcal{P}_1(\mathbb{R}^d)$, $\mu\preceq_{\mathrm{cx}}\nu$ is equivalent to $\mathcal{M}(\mu,\nu)\neq\varnothing$.
In particular, since affine functions are both convex and concave, $\mu$ and $\nu$ have the same barycenter:
\[
\int_{\mathbb{R}^d} x\,\mu(\mathrm{d}x)
=
\int_{\mathbb{R}^d} y\,\nu(\mathrm{d}y).
\]

For two probability measures $\alpha$ and
$\beta$, their relative entropy, or Kullback--Leibler divergence, is
\[
H(\alpha\mid\beta)
:=
\begin{cases}
\displaystyle \int \log\!\left(\frac{\mathrm{d}\alpha}{\mathrm{d}\beta}\right)\,\mathrm{d}\alpha,
& \text{if }\alpha\ll\beta,\\[1.2ex]
+\infty,&\text{otherwise.}
\end{cases}
\]

Let $R\in\mathcal{P}(\mathbb{R}^d\times\mathbb{R}^d)$ be a reference measure. 
The entropy martingale optimal transport problem is
\begin{equation}
\inf_{\pi\in\mathcal{M}(\mu,\nu)} H(\pi\mid R).
\label{eq:emot}
\end{equation}
A \emph{feasible coupling} for \eqref{eq:emot} is a measure
\(\pi\in\mathcal M(\mu,\nu)\), satisfying both prescribed marginals and
the martingale constraint. It is a \emph{finite-entropy feasible coupling}
if, in addition, \(H(\pi\mid R)<\infty\).
Whenever \eqref{eq:emot} admits a minimizer, we denote it by $\pi^\star$.

The following assumptions specify the marginals and reference measure.
The compactness and regularity conditions needed for the main results
will be stated separately.

\begin{assumption}[Marginals and reference measure]
\label{ass:main}
\leavevmode
\begin{enumerate}
\item The marginals $\mu,\nu\in\mathcal{P}_1(\mathbb{R}^d)$ satisfy $\mu\preceq_{\mathrm{cx}}\nu$.
\item The reference measure $R$ admits a strictly positive density with
respect to $\mu\otimes\nu$. Equivalently, there exists a measurable function
$c:\mathbb{R}^d\times\mathbb{R}^d\to\mathbb{R}$ such that
\begin{equation}
\frac{\mathrm{d}R}{\mathrm{d}(\mu\otimes\nu)}(x,y)
=
\exp\bigl(-c(x,y)\bigr)>0
\qquad
\text{for }(\mu\otimes\nu)\text{-almost every }(x,y).
\label{eq:reference-density}
\end{equation}
\end{enumerate}
\end{assumption}
We call \(c\) the reference cost. Since \(R\) is a probability measure,
\[
\int e^{-c(x,y)}\,\mu(\mathrm dx)\nu(\mathrm dy)=1.
\]
The reference measure need not itself have marginals \(\mu\) and \(\nu\).

We first write the constraint equations for a coupling admitting an exponential representation. This representation motivates the Sinkhorn iteration introduced further; the
existence of such a representation for the optimizer is proved in
\cref{thm:optimizer_representation}.

Under \cref{ass:main}, let $\pi\in\mathcal{M}(\mu,\nu)$, and suppose that there exist measurable functions
$\varphi,\psi:\mathbb{R}^d\to\mathbb{R}$ and $h:\mathbb{R}^d\to\mathbb{R}^d$ such that
\begin{equation}
\label{eq:exp_representation}
\frac{\mathrm{d}\pi}{\mathrm{d}R}(x,y)
=
\exp\bigl(-\varphi(x)-\psi(y)-h(x)\cdot(y-x)\bigr).
\end{equation}
Then
\begin{equation*}
\frac{\mathrm{d}\pi}{\mathrm{d}(\mu\otimes\nu)}(x,y)
=
\exp\bigl(-\varphi(x)-\psi(y)-h(x)\cdot(y-x)-c(x,y)\bigr),
\end{equation*}
and the martingale and marginal constraints yield
\begin{align*}
x
&=
\int_{\mathbb{R}^d}
y\,
\exp\bigl(-\varphi(x)-\psi(y)-h(x)\cdot(y-x)-c(x,y)\bigr)
\,\nu(\mathrm{d}y)
\quad\text{for }\mu\text{-almost every }x,
\\
1
&=
\int_{\mathbb{R}^d}
\exp\bigl(-\varphi(x)-\psi(y)-h(x)\cdot(y-x)-c(x,y)\bigr)
\,\mu(\mathrm{d}x)
\quad\text{for }\nu\text{-almost every }y,
\\
1
&=
\int_{\mathbb{R}^d}
\exp\bigl(-\varphi(x)-\psi(y)-h(x)\cdot(y-x)-c(x,y)\bigr)
\,\nu(\mathrm{d}y)
\quad\text{for }\mu\text{-almost every }x.
\end{align*}
Moreover, if $(X,Y)\sim\pi$, then a version of the conditional law of $Y$ given $X=x$ is
\begin{equation*}
\pi_x(\mathrm{d}y)
=
\exp\bigl(-\varphi(x)-\psi(y)-h(x)\cdot(y-x)-c(x,y)\bigr)\,\nu(\mathrm{d}y)
\quad\text{for }\mu\text{-almost every }x.
\end{equation*}
If a coupling admits the representation \eqref{eq:exp_representation},
we refer to \(\varphi,\psi,h\) as its dual variables or dual potentials.

We next introduce the martingale Sinkhorn algorithm for solving the EMOT
problem~\eqref{eq:emot}.
Steps~1 and~2 together impose the first marginal and the martingale
constraint. Step~3 imposes the second marginal; it need not preserve the
first marginal or the martingale constraint.

\begin{definition}[Martingale Sinkhorn iteration]
\label{def:martingale-sinkhorn}
Initialize the iteration by
\[
h^0=0,
\qquad
\varphi^0=0,
\qquad
\psi^0=0.
\]
For $n\geq 0$, define $(h^{n+1},\varphi^{n+1},\psi^{n+1})$ successively as
follows.

\smallskip
\noindent\textbf{Step 1.} For $\mu$-almost every $x$, choose
$h^{n+1}(x)$ as a solution of
\begin{equation}
0
=
\int_{\mathbb{R}^d}
(y-x)
\exp\bigl(-\varphi^n(x)-\psi^n(y)-h^{n+1}(x)\cdot(y-x)-c(x,y)\bigr)
\,\nu(\,\mathrm{d} y).
\label{eq:h-update}
\end{equation}

\smallskip
\noindent\textbf{Step 2.} For $\mu$-almost every $x$, set
\begin{equation}
\varphi^{n+1}(x)
=
\log\!\left(
\int_{\mathbb{R}^d}
\exp\bigl(-\psi^n(y)-h^{n+1}(x)\cdot(y-x)-c(x,y)\bigr)
\,\nu(\,\mathrm{d} y)
\right).
\label{eq:varphi-update}
\end{equation}

\smallskip
\noindent\textbf{Step 3.} For $\nu$-almost every $y$, set
\begin{equation}
\psi^{n+1}(y)
=
\log\!\left(
\int_{\mathbb{R}^d}
\exp\bigl(-\varphi^{n+1}(x)-h^{n+1}(x)\cdot(y-x)-c(x,y)\bigr)
\,\mu(\,\mathrm{d} x)
\right).
\label{eq:psi-update}
\end{equation}
\end{definition}

Proposition~\ref{prop:compact-well-posedness} proves that these updates
are well defined under extra assumptions formulated in \cref{ass:two-step}.
Definition~\ref{def:entropy-admissible-iteration} records the conditions on Sinkhorn iterates
needed for some general entropy identities.

% We keep separate notation for the two half-steps, since they satisfy
% different constraints.

\begin{definition}[Intermediate measures / half-steps]
\label{def:intermediate-measures}
For every $n\geq 0$, define the half-step measures
\begin{equation*}
\frac{\mathrm{d}\pi^{n,n}}{\mathrm{d}(\mu\otimes\nu)}(x,y)
=
\exp\bigl(-\varphi^n(x)-\psi^n(y)-h^n(x)\cdot(y-x)-c(x,y)\bigr),
\end{equation*}
and
\begin{equation*}
\frac{\mathrm{d}\pi^{n+1,n}}{\mathrm{d}(\mu\otimes\nu)}(x,y)
=
\exp\bigl(-\varphi^{n+1}(x)-\psi^n(y)-h^{n+1}(x)\cdot(y-x)-c(x,y)\bigr).
\end{equation*}
We call the update \(\pi^{n,n}\mapsto\pi^{n+1,n}\)
(Steps~1--2) the \emph{martingale half-step}, and
\(\pi^{n+1,n}\mapsto\pi^{n+1,n+1}\) (Step~3) the
\emph{terminal-marginal half-step}. We use the same names for the
resulting measures.
The one-sided constraint sets are defined by
\begin{equation*}
\begin{aligned}
\mathcal{M}(\mu,\cdot)
&:=
\left\{
\begin{array}{l}
\pi\in\mathcal{P}_1(\mathbb{R}^d\times\mathbb{R}^d):\ \pi_{\#x}=\mu,\\[0.2ex]
\displaystyle
\int \zeta(x)\cdot(y-x)\,\pi(\,\mathrm{d}x,\,\mathrm{d}y)=0\\[0.2ex]
\text{for every bounded measurable }\zeta:\mathbb{R}^d\to\mathbb{R}^d
\end{array}
\right\},
\\[1ex]
\Pi(\cdot,\nu)
&:=
\left\{
\pi\in\mathcal{P}(\mathbb{R}^d\times\mathbb{R}^d):\ \pi_{\#y}=\nu
\right\}.
\end{aligned}
\end{equation*}
The martingale constraint in \eqref{eq:h-update}, together with the marginal constraints in \eqref{eq:varphi-update} and \eqref{eq:psi-update}, gives
\[
\pi^{n+1,n}\in\mathcal{M}(\mu,\cdot),
\qquad
\pi^{n+1,n+1}\in\Pi(\cdot,\nu).
\]
We set
\[
\nu^{n+1,n}:=\pi^{n+1,n}_{\#y},
\qquad
\mu^{n+1,n+1}:=\pi^{n+1,n+1}_{\#x}.
\]
The martingale half-step is feasible for \eqref{eq:emot} precisely when
\(\nu^{n+1,n}=\nu\).
\end{definition}

Define
\[
\mathcal{X}:=\operatorname{supp}(\mu),
\qquad
\mathcal{Y}:=\operatorname{supp}(\nu).
\]

For a continuous scalar- or vector-valued function \(f\) on a compact set
\(S\), we use the notation
\[
\|f\|_{C(S)}:=\sup_{z\in S}|f(z)|,
\qquad
\operatorname{Lip}(f):=
\sup_{\substack{z,z'\in S\\z\neq z'}}
\frac{|f(z)-f(z')|}{|z-z'|}.
\]
The Lipschitz seminorm is zero when \(S\) is a singleton. We abbreviate
\(\|f\|_{C(S)}\) to \(\|f\|_\infty\) when the domain is clear.

The support condition ensures the solvability of the first equation of the Sinkhorn martingale update.
The strict convex order condition will prevent the sequence of terminal potentials \((\psi^n)_{n\ge 0}\) from becoming
unbounded modulo affine functions. The Lipschitz assumptions on the
cost provide uniform regularity of the updated potentials.

\begin{assumption}[Compact setting, regularity and strict convex order]
\label{ass:two-step}
\leavevmode
\begin{enumerate}
\item \textbf{Geometry.} The sets \(\mathcal X\) and \(\mathcal Y\) are
compact, \(\mathcal Y\) is convex, and
\[
\mathcal X\subset\operatorname{ri}(\mathcal Y),
\qquad \text{where }
\operatorname{ri}(\mathcal Y)
:=
\left\{
y\in\mathcal Y:
\exists r>0\text{ such that }
B(y,r)\cap\operatorname{aff}(\mathcal Y)\subset\mathcal Y
\right\},
\]
and $\operatorname{aff}(\mathcal Y)$ denotes the affine hull of $\mathcal{Y}$.

\item \textbf{Regular reference cost.} The function \(c\) is uniformly
Lipschitz in each variable, with partial Lipschitz seminorms
\[
\begin{aligned}
\operatorname{Lip}_x(c)
&:=\sup_{y\in\mathcal Y}\operatorname{Lip}(c(\cdot,y))<\infty,\\
\operatorname{Lip}_y(c)
&:=\sup_{x\in\mathcal X}\operatorname{Lip}(c(x,\cdot))<\infty.
\end{aligned}
\]

\item \textbf{Strict convex order.} For every continuous convex function
\(u:\mathcal{Y}\to\mathbb{R}\) that is not affine on \(\mathcal{Y}\),
\begin{equation}
\int_\mathcal{X}u(x)\,\mu(\,\mathrm{d} x)
<
\int_\mathcal{Y}u(y)\,\nu(\,\mathrm{d} y).
\label{eq:strict-convex-order}
\end{equation}
\end{enumerate}
\end{assumption}

\begin{remark}[Relative geometry and reduction of dimension]
\label{rem:compact-support-consequences}
The purpose of this remark is to reduce the possibly lower-dimensional
support \(\mathcal Y\) to a full-dimensional setting. Let
\[
A:=\operatorname{aff}(\mathcal Y),
\]
the smallest affine subspace containing \(\mathcal Y\), and let
\(k:=\dim A\) be its affine dimension.

If \(k=0\), both marginals are the same point mass, \(R\) is the point
mass on the corresponding diagonal point, and all coupling iterates
equal \(R\). The main theorems are then immediate, with zero potentials
and any contraction factor in \((0,1)\).

Suppose now that \(k\geq1\), and choose an affine isometry
\(T:\mathbb R^k\to A\). Pulling back the measures and the cost by \(T\)
preserves martingale constraints, relative entropy, strict convex order,
and the Lipschitz seminorms. We keep the same notation for the pulled-back
objects. Then \(\mathcal Y\) has nonempty interior in \(\mathbb R^k\), and
compactness of \(\mathcal X\subset\operatorname{int}(\mathcal Y)\) gives
\(\rho>0\) such that
\begin{equation}
B(x,\rho)\subset\mathcal Y
\qquad \forall x\in\mathcal X.
\label{eq:uniform-interior-ball}
\end{equation}

In all subsequent proofs under Assumption~\ref{ass:two-step}, we work in
these reduced coordinates and write \(d\) for \(k\). Thus \(\mathcal Y\)
is full-dimensional and
\(\operatorname{ri}(\mathcal Y)=\operatorname{int}(\mathcal Y)\).
\end{remark}

\begin{remark}[Strict convex order and positive feasible couplings]
\label{rem:strict-convex-order}
The strict condition in Assumption~\ref{ass:two-step}(3) also implies
ordinary convex order in this compact setting. In the nontrivial case,
\(u_0(y):=|y|^2\) is not affine on \(\mathcal Y\). For every affine
\(\ell\) and every \(t\in\mathbb R\), the function \(u_0+t\ell\)
remains convex and non-affine. Applying the strict inequality to it for
all \(t\) forces
\(\int\ell\,\mathrm d\nu=\int\ell\,\mathrm d\mu\).
The inequality for non-affine convex functions and equality for affine
ones therefore give \(\mu\preceq_{\mathrm{cx}}\nu\). Thus the main
theorems need only the reference-measure part of \cref{ass:main} in
addition to \cref{ass:two-step}.

Under the geometry in Assumption~\ref{ass:two-step}(1), the existence
of \(\gamma\in\mathcal M(\mu,\nu)\) with
\(\gamma\sim\mu\otimes\nu\) implies
Assumption~\ref{ass:two-step}(3). Indeed, disintegration gives
\(\gamma_x\sim\nu\) for \(\mu\)-almost every \(x\).
For a continuous non-affine convex function \(u\) on \(\mathcal Y\)
and such an \(x\in\operatorname{ri}(\mathcal Y)\), choose a supporting
affine function \(\ell_x\) on \(\operatorname{aff}(\mathcal Y)\), with
\(\ell_x\leq u\) on \(\mathcal Y\) and \(\ell_x(x)=u(x)\).
The martingale property gives
\[
\int_\mathcal Y u(y)\,\gamma_x(\mathrm dy)-u(x)
=\int_\mathcal Y (u-\ell_x)(y)\,\gamma_x(\mathrm dy)>0,
\]
since \(u-\ell_x\) is continuous, nonnegative and not identically zero,
while \(\gamma_x\) has full support \(\mathcal Y\). Integrating against
\(\mu\) proves the strict inequality. In particular, the bounded
log-density feasibility assumption in Assumption~2.3 of
\cite{ChenConfortiRenWang2026SinkhornEMOT} implies this condition in the
present geometric setting.
\end{remark}

Our first result bounds the dual potentials associated with the terminal marginal constraint uniformly after
subtraction of suitable affine functions.

\begin{theorem}[Uniform bound modulo affine functions]
\label{thm:uniform-affine-gauge-bound}
Under \cref{ass:main} and \cref{ass:two-step}, there exists
\(C_\psi<\infty\) such that, for every \(n\geq0\), an affine function
\(\ell_n:\mathcal Y\to\mathbb R\) can be chosen with
\[
\|\psi^n-\ell_n\|_{C(\mathcal Y)}\leq C_\psi.
\]
\end{theorem}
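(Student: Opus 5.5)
The plan is to reduce the statement to a convexity argument for the functions produced by Step~3, and then to use strict convex order, via a compactness argument, to bound the non-affine part. Work in the reduced coordinates of \cref{rem:compact-support-consequences}, and fix \(y_0\in\mathcal Y\). Put \(D:=\operatorname{diam}(\mathcal Y)\) and \(L_y:=\operatorname{Lip}_y(c)\). For \(n\ge0\), absorb \(c(x,y_0)\) into the initial potential and rewrite \eqref{eq:psi-update} as
\[
\psi^{n+1}(y)=F_n(y)+b_n(y),\qquad
F_n(y):=\log\int_{\mathcal X}\exp\bigl(-\varphi^{n+1}(x)-c(x,y_0)-h^{n+1}(x)\cdot(y-x)\bigr)\,\mu(\mathrm dx).
\]
Here \(F_n\) is a log-Laplace transform of affine functions of \(y\), hence convex and continuous on \(\mathcal Y\). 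The remainder is bounded by \(\|b_n\|_{C(\mathcal Y)}\le L_yD\), since \(|c(x,y)-c(x,y_0)|\le L_yD\). It therefore suffices to bound \(F_n\) modulo affine functions, uniformly in \(n\); the case \(n=0\) is trivial.

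Next I would extract an a priori integral bound from a monotone functional. The natural candidate is the dual functional \(\mathcal D_n:=-\int\varphi^n\,\mathrm d\mu-\int\psi^n\,\mathrm d\nu\) (with the convention for \(h\) built into the half-step measures). Since Steps~1--3 are exact block maximizations of the concave dual, \(\mathcal D_n\) is nondecreasing, so \(\mathcal D_n\ge\mathcal D_0=0\). Combining this with the half-step normalizations, \(\pi^{n+1,n}\in\mathcal M(\mu,\cdot)\) and \(\pi^{n+1,n+1}\in\Pi(\cdot,\nu)\), and with Jensen's inequality, I aim to show that
\[
\int_{\mathcal Y}F_n\,\mathrm d\nu-\int_{\mathcal X}F_n\,\mathrm d\mu\le C
\]
for a constant \(C\) depending only on \(L_yD\) and \(\operatorname{Lip}_x(c)D\). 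The quantity on the left is gauge invariant, because affine functions integrate identically against \(\mu\) and \(\nu\) by \cref{rem:strict-convex-order}. Heuristically, if \(\psi^n\) had a large non-affine convex part, the martingale half-step would be forced to produce \(\mathcal D\) below its previous value.

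The remaining step is a coercivity lemma, proved by compactness. Normalize each convex \(u\) on \(\mathcal Y\) by subtracting the affine function \(\ell_u\) that interpolates \(u\) at \(d+1\) fixed affinely independent points of \(\operatorname{int}\mathcal Y\). I claim there is \(\kappa>0\) such that
\[
\int u\,\mathrm d\nu-\int u\,\mathrm d\mu\ \ge\ \kappa\,\|u-\ell_u\|_{C(\mathcal Y)}
\]
for every convex \(u\). Suppose not. Rescale so that \(\|u_k-\ell_{u_k}\|=1\) while the gap tends to \(0\). Uniformly bounded convex functions are equi-Lipschitz on compact subsets of the interior, so a subsequence converges locally uniformly to a convex \(u\). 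The difficulty is that this \(u\) may fail to be continuous up to \(\partial\mathcal Y\), and \(\nu\) may charge the boundary. I would handle this with lower semicontinuity of \(\int u\,\mathrm d\nu\), using Fatou's lemma since convex functions bounded below are controlled, together with the uniform-ball condition \eqref{eq:uniform-interior-ball}, which makes \(\int u\,\mathrm d\mu\) continuous because \(\mathcal X\) is compact in the interior. If \(u\) were affine, the normalization would force \(u=0\) on the interior, and the mass of the normalized sup would have to escape to the boundary. I would exclude this by noting that \(u_k-\ell_{u_k}\ge -C\) on \(\mathcal Y\) and that the gap controls \(\int (u_k-\ell_{u_k})^+\,\mathrm d\nu\) near the boundary; this part may need the convexity of \(\mathcal Y\) and a reduction to continuous extensions. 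Otherwise \(u\) is non-affine, and \eqref{eq:strict-convex-order}, applied to a continuous regularization of it, contradicts a vanishing gap.

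I expect this boundary issue to be the main obstacle, because the coercivity lemma must hold uniformly up to \(\partial\mathcal Y\) without any assumption on \(\nu\) near the boundary. Obtaining the integral bound from \(\mathcal D_n\) is the second delicate point. Granting both, \(\|F_n-\ell_{F_n}\|\le C/\kappa\), and the theorem follows with \(\ell_n:=\ell_{F_{n-1}}\) and \(C_\psi:=C/\kappa+L_yD\).
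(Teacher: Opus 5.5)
Your coercivity lemma is false, and the boundary obstacle you identified cannot be removed by semicontinuity, Fatou or regularization arguments. Take $d=1$, $\mathcal Y=[-1,1]$, $\nu$ uniform and $\mu=\delta_0$; these satisfy the geometric and strict convex order assumptions. Take interpolation points $\pm\tfrac12$ (or any fixed interior points, for $k$ large) and $u_k(y)=\max\{0,1+k(y-1)\}$. For $k\geq2$ one has $\ell_{u_k}=0$ and $\|u_k-\ell_{u_k}\|_{C(\mathcal Y)}=1$; in fact $\inf_{\ell\in\mathcal A(\mathcal Y)}\|u_k-\ell\|_{C(\mathcal Y)}=\tfrac12-\tfrac1{4k}$. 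Yet $\int u_k\,\mathrm d\nu-\int u_k\,\mathrm d\mu=\tfrac1{4k}\to0$. The gap only controls $\int(u-\ell_u)^+\,\mathrm d\nu$, which is tiny for a steep, thin boundary layer. So convexity alone never gives a bound in $C(\mathcal Y)$.

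What rules out such layers for the Sinkhorn potentials is an a priori Lipschitz bound that your proposal never uses. $\nabla F_n$ is a tilted $\mu$-average of $-h^{n+1}$. So if $T_n:=\inf_{\ell\in\mathcal A(\mathcal Y)}\|\psi^n-\ell\|_{C(\mathcal Y)}$ and $\ell$ nearly attains it, \eqref{eq:linear-multiplier-bound} and \eqref{eq:update-lipschitz-bounds} give $\operatorname{Lip}(\psi^{n+1}-\ell)\leq C(1+T_n)$. This is control at scale $T_n$, not $T_{n+1}$. That is why the paper works at record times $T_{n_j}=\max_{m\leq n_j}T_m$. There the rescaled potentials $\psi^{n_j,\ell_j}/T_{n_j}$ are bounded and equi-Lipschitz on all of $\mathcal Y$. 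Arzel\`a--Ascoli then gives uniform convergence up to $\partial\mathcal Y$ to a continuous convex function at distance one from $\mathcal A(\mathcal Y)$, and \eqref{eq:strict-convex-order} applies directly. Coercivity does hold on such a Lipschitz class, by exactly this compactness, but not on all convex functions.

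Your integral bound is also not obtainable in the stated form. Your monotonicity claim for $\mathcal D_n$ is fine, and so is the paper's version $\Phi_n=\int\varphi^{n+1}\,\mathrm d\mu+\int\psi^n\,\mathrm d\nu\leq0$. But Jensen points the wrong way here; in the paper it is only used to show $\Phi_0\leq0$. Moreover, the natural pointwise bound $\varphi^{n+1}\geq-\psi^n-C$, with $C$ independent of the size of $\psi^n$, fails in general. For $c=0$ and $\psi(y)=K|y-x|$ one has $\varphi_\psi(x)+\psi(x)\leq\log\int e^{-K|y-x|}\,\nu(\mathrm dy)\to-\infty$ as $K\to\infty$ whenever $\nu(\{x\})=0$. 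So no constant depending only on $\operatorname{Lip}_y(c)D$ and $\operatorname{Lip}_x(c)D$ can come out of these ingredients.

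The working mechanism is a reverse estimate. Restrict the $\varphi$-update integral to $B(x,\varepsilon)\subset\mathcal Y$ and use $|h^{n+1,\ell}|\leq Ct$ and $\operatorname{Lip}(\psi^{n,\ell})\leq Ct$, both available at record times with $t=T_n$. This gives $\varphi^{n+1,\ell}(x)\geq-\psi^{n,\ell}(x)-Ct\varepsilon-\|c\|_{C(\mathcal X\times\mathcal Y)}+\log m_\nu(\varepsilon)$. Then $\Phi_n\leq0$ yields $\int\psi^n\,\mathrm d\nu-\int\psi^n\,\mathrm d\mu\leq Ct\varepsilon+\|c\|_{C(\mathcal X\times\mathcal Y)}-\log m_\nu(\varepsilon)$. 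This depends on $t$ and on the small-ball masses of $\nu$, but it is $o(t)$. That suffices against the linear lower bound $\kappa t$ on the Lipschitz class once $\varepsilon<\kappa/C$. With these two corrections your outline becomes the paper's proof, which phrases the comparison as a contradiction after dividing by $t_j$, letting $j\to\infty$ and then $\varepsilon\to0$.
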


This bound yields convergence of the marginals in relative entropy.

\begin{theorem}[Convergence of the marginals of the martingale Sinkhorn iteration]
\label{thm:main-convergence}
Under \cref{ass:main} and \cref{ass:two-step}, we have
\begin{equation}
\begin{aligned}
H\bigl(\mu\mid\mu^{n+1,n+1}\bigr)&\longrightarrow 0,
&
H\bigl(\mu^{n+1,n+1}\mid\mu\bigr)&\longrightarrow 0,
\\
H\bigl(\nu\mid\nu^{n+1,n}\bigr)&\longrightarrow 0,
&
H\bigl(\nu^{n+1,n}\mid\nu\bigr)&\longrightarrow 0,
\end{aligned}
\qquad n\longrightarrow+\infty.
\label{eq:main-convergence}
\end{equation}
\end{theorem}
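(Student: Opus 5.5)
The plan is to use a Lyapunov functional for the iteration: a gauge-invariant dual objective whose one-step increments are exactly the two half-step entropies, bounded over the iteration by \cref{thm:uniform-affine-gauge-bound}. The natural candidate is
\[
\mathcal J(\varphi,\psi,h):=\int\varphi\,\mathrm d\mu+\int\psi\,\mathrm d\nu,
\]
where the $h$-term drops out because $\int h(x)\cdot(y-x)\,\pi(\mathrm dx,\mathrm dy)=0$ on martingale couplings. The functional is invariant under the gauge $\psi\mapsto\psi+\ell$, $\varphi\mapsto\varphi-\ell$, $h\mapsto h-\nabla\ell$ for affine $\ell$, since $\mu$ and $\nu$ have the same barycenter.

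The first step is to compute the increments. Using $\mathrm d\pi^{n+1,n+1}/\mathrm d\pi^{n+1,n}=e^{-(\psi^{n+1}-\psi^n)(y)}$ and $\pi^{n+1,n+1}_{\#y}=\nu$, I expect
\[
\int(\psi^{n+1}-\psi^n)\,\mathrm d\nu=-H(\nu\mid\nu^{n+1,n}),
\]
because $e^{-(\psi^{n+1}-\psi^n)}=\mathrm d\nu/\mathrm d\nu^{n+1,n}$ by \eqref{eq:psi-update}. For the martingale half-step, $\log(\mathrm d\pi^{n+1,n}/\mathrm d\pi^{n,n})=-(\varphi^{n+1}-\varphi^n)(x)-(h^{n+1}-h^n)(x)\cdot(y-x)$. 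Integrating this against $\pi^{n+1,n}\in\mathcal M(\mu,\cdot)$ kills the $h$-term and gives
\[
H(\pi^{n+1,n}\mid\pi^{n,n})=-\int(\varphi^{n+1}-\varphi^n)\,\mathrm d\mu .
\]
Integrating against $\pi^{n,n}$ instead uses $\pi^{n,n}_{\#y}=\nu$ and $\pi^{n,n}\in\Pi(\cdot,\nu)$, but $\pi^{n,n}$ is not a martingale coupling, so I will only use the first identity. Together these give
\[
\mathcal J^{n+1}=\mathcal J^n-H(\pi^{n+1,n}\mid\pi^{n,n})-H(\nu\mid\nu^{n+1,n}),
\]
where $\mathcal J^n$ denotes $\mathcal J$ at the $n$-th iterate. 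The general entropy identities of Section~3 should provide exactly this. Consequently $\mathcal J^n$ is nonincreasing.

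The second step, which I expect to be the main obstacle, is a lower bound on $\mathcal J^n$. I would work in the gauge of \cref{thm:uniform-affine-gauge-bound}, where $\tilde\psi^n=\psi^n-\ell_n$ is bounded by $C_\psi$. Then $\tilde\varphi^n$ is given by \eqref{eq:varphi-update} with $\tilde\psi^n$ and a gauged $\tilde h^n$, and $\tilde h^n$ solves the martingale equation \eqref{eq:h-update}.

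To bound $\tilde\varphi^n$ from below, I plan to use that $\tilde h^n(x)$ minimizes the convex function $k\mapsto\log\int e^{-\tilde\psi^n(y)-k\cdot(y-x)-c(x,y)}\nu(\mathrm dy)$ over $k$. Testing at $k=0$ only gives an upper bound, so instead I would use Jensen's inequality. Indeed $\log\int e^{f}\,\mathrm d\nu\ge\int f\,\mathrm d\nu$, and $\int k\cdot(y-x)\,\nu(\mathrm dy)$ is controlled because $x\in\mathcal X$ and $\mathcal Y$ is compact. This still involves $|k|$, so I would need a bound on $|\tilde h^n|$. That bound should come from the uniform interior ball \eqref{eq:uniform-interior-ball}: mass of $\nu$ near the boundary is needed. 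Otherwise, the same convexity argument presumably already used to prove the gauge bound should give it. Granting it, $\tilde\varphi^n$ is bounded below uniformly, $\mathcal J^n$ is bounded below, and the increments above are summable.

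The third step draws the conclusions. Summability gives $H(\nu\mid\nu^{n+1,n})\to0$ and $H(\pi^{n+1,n}\mid\pi^{n,n})\to0$. Since $\mu^{n+1,n+1}$ is the first marginal of $\pi^{n+1,n+1}$ and $\mu$ is the first marginal of $\pi^{n+1,n}$, the data-processing inequality gives
\[
H(\mu\mid\mu^{n+1,n+1})\le H(\pi^{n+1,n}\mid\pi^{n+1,n+1})=\int(\psi^{n+1}-\psi^n)\,\mathrm d\pi^{n+1,n}.
\]
For the reverse directions, the gauge bounds together with Lipschitz estimates should give a uniform bound on the log-densities, namely $|\psi^{n+1}-\psi^n-(\ell_{n+1}-\ell_n)|\le 2C_\psi$ and bounded densities $\mathrm d\nu^{n+1,n}/\mathrm d\nu$. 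With bounded log-densities, $H(\alpha\mid\beta)$ and $H(\beta\mid\alpha)$ are comparable, and Pinsker's inequality together with these bounds converts convergence in one direction into the other. This yields all four limits in \eqref{eq:main-convergence}.
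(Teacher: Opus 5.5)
Your overall scheme matches the paper's: a gauge-invariant dual functional whose decrement is the sum of the two half-step entropies, a lower bound in a good gauge, then summability. However, two steps do not go through as written.

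\textbf{The lower bound is misindexed.} Your $\mathcal J^n=\int\varphi^n\,\mathrm d\mu+\int\psi^n\,\mathrm d\nu$ pairs $\varphi^n$ with $\psi^n$. But by \eqref{eq:h-update}--\eqref{eq:varphi-update}, the pair $(h^n,\varphi^n)$ is computed from $\psi^{n-1}$, not from $\psi^n$; $\psi^n$ is the output of \eqref{eq:psi-update}. In the gauge $\ell_n$, only $\psi^n-\ell_n$ is known to be bounded, and $\psi^{n-1}-\ell_n$ is not controlled. So neither \eqref{eq:linear-multiplier-bound} nor the $\varphi$-update bounds $h^{n,[n]}$ or $\varphi^{n,[n]}$. (The multiplier bound you were looking for is exactly \eqref{eq:linear-multiplier-bound} from \cref{prop:compact-well-posedness}.) The repair is to propagate one update inside a fixed gauge. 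With $\ell_{n-1}$, the bound $\|\psi^{n-1,[n-1]}\|_{C(\mathcal Y)}\le C_\psi$ bounds $h^{n,[n-1]}$. This then bounds $\varphi^{n,[n-1]}$ via \eqref{eq:varphi-update}, and then $\psi^{n,[n-1]}$ via \eqref{eq:psi-update}; gauge invariance then bounds $\mathcal J^n$. Alternatively, use the paper's $\Phi_n=\int\varphi^{n+1}\,\mathrm d\mu+\int\psi^n\,\mathrm d\nu$, where $\varphi$ is computed from $\psi$, so the single gauge $\ell_n$ suffices. Since $\mathcal J^n-\Phi_n=H(\pi^{n+1,n}\mid\pi^{n,n})\ge0$, this also bounds your functional.

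\textbf{In the last step you pair the wrong couplings.} Comparing $\pi^{n+1,n}$ with $\pi^{n+1,n+1}$ gives $H(\mu\mid\mu^{n+1,n+1})\le H(\nu^{n+1,n}\mid\nu)$. This is a reverse entropy, not one of your summable terms, so your route then requires $\sup_n\|\psi^{n+1}-\psi^n\|_{C(\mathcal Y)}<\infty$. Your justification does not give this. The inequality $|\psi^{n+1}-\psi^n-(\ell_{n+1}-\ell_n)|\le2C_\psi$ only transfers the problem to bounding $\ell_{n+1}-\ell_n$ on $\mathcal Y$, which is equivalent to the claim, and the Lipschitz estimates do not control that difference. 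The missing fact is again the one-update estimate in a fixed gauge, $\|\psi^{n+1,[n]}\|_{C(\mathcal Y)}\le C$ (\cref{prop:uniform-two-step}), which gives $\|\psi^{n+1}-\psi^n\|_{C(\mathcal Y)}\le C+C_\psi$. With it, your comparability argument is correct.

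The detour is unnecessary, though. Since $\pi^{n+2,n+1}$ has first marginal $\mu$, data processing against $\pi^{n+1,n+1}$ bounds both $H(\mu\mid\mu^{n+1,n+1})$ and $H(\nu^{n+2,n+1}\mid\nu)$ by $H(\pi^{n+2,n+1}\mid\pi^{n+1,n+1})$. That is your summable martingale half-step term at index $n+1$. Data processing of $H(\pi^{n+1,n+1}\mid\pi^{n+1,n})=H(\nu\mid\nu^{n+1,n})$ bounds $H(\mu^{n+1,n+1}\mid\mu)$. This is \eqref{eq:marginal-data-processing}, and it gives all four limits without any log-density bounds.
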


The uniform potential estimates yield a convergent subsequence of
the martingale half-steps as defined in\cref{def:intermediate-measures}. Marginal
convergence shows that the limit has the prescribed marginals, and its
exponential representation gives optimality and uniqueness.

\begin{theorem}[Representation of the EMOT optimizer]
\label{thm:optimizer_representation}
Under Assumption~\ref{ass:main} and \cref{ass:two-step}, the EMOT problem
\eqref{eq:emot} admits a unique minimizer
$\pi^\star\in\mathcal{M}(\mu,\nu)$ with
\(H(\pi^\star\mid R)<\infty\). Moreover, there exist
\[
\varphi^\star\in\operatorname{Lip}(\mathcal{X}),\qquad
\psi^\star\in\operatorname{Lip}(\mathcal{Y}),\qquad
h^\star\in\operatorname{Lip}(\mathcal{X};\mathbb R^d),
\]
such that
\[
\frac{\mathrm{d}\pi^\star}{\mathrm{d}R}(x,y)
=
\exp\!\left(
-\varphi^\star(x)
-\psi^\star(y)
-h^\star(x)\cdot(y-x)
\right)
\]
for $R$-almost every $(x,y)\in\mathcal{X}\times\mathcal{Y}$.
\end{theorem}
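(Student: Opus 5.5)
We will run the martingale Sinkhorn iteration of \cref{def:martingale-sinkhorn}, extract a limit of the martingale half-steps \(\pi^{n+1,n}\) by compactness, and then identify it as the unique optimizer through its exponential form.

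\textbf{Step 1: gauge the potentials.} By \cref{thm:uniform-affine-gauge-bound}, \(\|\psi^n-\ell_n\|_{C(\mathcal Y)}\le C_\psi\) for affine functions \(\ell_n(y)=a_n\cdot y+b_n\). The density of \(\pi^{n+1,n}\) is unchanged if we replace
\[
\psi^n\mapsto\tilde\psi^n:=\psi^n-\ell_n,\qquad
h^{n+1}\mapsto\tilde h^{n+1}:=h^{n+1}+a_n,\qquad
\varphi^{n+1}\mapsto\tilde\varphi^{n+1}:=\varphi^{n+1}+\ell_n(x).
\]
This works because \(\ell_n(y)=\ell_n(x)+a_n\cdot(y-x)\). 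The gauged tuple is again a solution of Steps 1--2 with \(\psi^n\) replaced by \(\tilde\psi^n\).

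\textbf{Step 2: uniform bounds on the gauged potentials.} We need sup and Lipschitz bounds on \(\tilde\psi^n\), \(\tilde h^{n+1}\) and \(\tilde\varphi^{n+1}\).

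For \(\tilde h^{n+1}\), \cref{eq:h-update} says that \(\tilde h^{n+1}(x)\) minimizes the strictly convex function
\[
F_x(h)=\log\int e^{-\tilde\psi^n(y)-h\cdot(y-x)-c(x,y)}\,\nu(\mathrm dy).
\]
Since \(\tilde\psi^n\) and \(c\) are bounded on compact sets, the interior ball \eqref{eq:uniform-interior-ball} gives coercivity: \(F_x(h)\ge -C+\rho'|h|\), because \(\nu\) charges the half-ball \(\{y\in B(x,\rho):(y-x)\cdot h\le -\rho|h|/2\}\) with mass bounded below uniformly in \(x\) and \(h/|h|\). The latter follows from compactness and full support of \(\nu\) on \(\mathcal Y\). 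Hence \(|\tilde h^{n+1}|\le C\). Then \(|\tilde\varphi^{n+1}|\le C\) follows directly from \eqref{eq:varphi-update}.

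Lipschitz regularity in \(x\) is the step I expect to be the main obstacle. For \(\tilde\varphi^{n+1}\), use \(\tilde\varphi^{n+1}(x)=\min_h F_x(h)+\text{const}\): \(F_x\) is Lipschitz in \(x\) uniformly over bounded \(h\), since \(c\) is Lipschitz in \(x\) and the term \(h\cdot x\) is controlled.

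For \(\tilde h^{n+1}\), I will use the implicit function argument. The Hessian \(\nabla^2_hF_x\) is the covariance of the tilted measure, which is bounded below by a uniform multiple of the identity by the same interior-ball argument. Since \(c\) is only Lipschitz, instead of differentiating I will use the strong convexity estimate
\[
\lambda|h_x-h_{x'}|^2\le\langle\nabla F_x(h_x)-\nabla F_x(h_{x'}),\,h_x-h_{x'}\rangle ,
\]
where \(h_x:=\tilde h^{n+1}(x)\). Because \(\nabla F_x(h_x)=0=\nabla F_{x'}(h_{x'})\), the right-hand side equals \(\langle\nabla F_{x'}(h_{x'})-\nabla F_x(h_{x'}),\,h_x-h_{x'}\rangle\). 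Finally, \(|\nabla F_{x'}(h)-\nabla F_x(h)|\le C|x-x'|\) for bounded \(h\), since the tilted measures have log-densities that are Lipschitz in \(x\).

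For \(\tilde\psi^{n}\), \eqref{eq:psi-update} gives Lipschitz continuity in \(y\) from \(\operatorname{Lip}_y(c)\) and the bound on \(\tilde h\).

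\textbf{Step 3: pass to the limit.} By Arzelà--Ascoli, choose a subsequence along which \((\tilde\varphi^{n+1},\tilde\psi^n,\tilde h^{n+1})\to(\varphi^\star,\psi^\star,h^\star)\) uniformly, with Lipschitz limits. The densities of \(\pi^{n+1,n}\) with respect to \(\mu\otimes\nu\) are uniformly bounded and converge uniformly, so \(\pi^{n+1,n}\to\pi^\star\) in total variation.

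The first marginal and the martingale constraint pass to the limit, so \(\pi^\star\in\mathcal M(\mu,\cdot)\). By \cref{thm:main-convergence}, \(\nu^{n+1,n}\to\nu\), hence \(\pi^\star\in\mathcal M(\mu,\nu)\). Its density with respect to \(R\) has the stated exponential form, and its entropy is finite because the log-density is bounded.

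\textbf{Step 4: optimality and uniqueness.} Invoke the optimality result of Section~3. Directly: for any \(\pi\in\mathcal M(\mu,\nu)\) with finite entropy,
\[
H(\pi\mid R)=H(\pi\mid\pi^\star)+\int\log\tfrac{\mathrm d\pi^\star}{\mathrm dR}\,\mathrm d\pi .
\]
The last integral equals \(-\int\varphi^\star\,\mathrm d\mu-\int\psi^\star\,\mathrm d\nu\), since the martingale term vanishes because \(h^\star\) is bounded. This quantity is the same for every \(\pi\in\mathcal M(\mu,\nu)\), including \(\pi^\star\). Hence \(H(\pi\mid R)\ge H(\pi^\star\mid R)\), with equality only if \(\pi=\pi^\star\). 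Uniqueness also follows from strict convexity of \(H(\cdot\mid R)\).
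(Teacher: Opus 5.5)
The overall route is the paper's: affine gauge, sup and Lipschitz bounds from coercivity and strong convexity of $F_x$, Arzel\`a--Ascoli, a total-variation limit, and the Pythagorean identity. Your bounds on $\tilde h^{n+1}$ and $\tilde\varphi^{n+1}$, and Steps~3--4, are fine.

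The gap is the equi-Lipschitz bound for $\tilde\psi^n=\psi^n-\ell_n$. The update \eqref{eq:psi-update} that produces $\psi^n$ involves $\varphi^n$ and $h^n$, not $h^{n+1}$. With your notation $\ell_n(y)=a_n\cdot y+b_n$,
\[
\tilde\psi^n(y)=\log\int_{\mathcal X}\exp\bigl(-\varphi^n(x)-\ell_n(x)-(h^n(x)+a_n)\cdot(y-x)-c(x,y)\bigr)\,\mu(\mathrm dx),
\]
so this formula only gives $\operatorname{Lip}(\tilde\psi^n)\leq\|h^n+a_n\|_{C(\mathcal X)}+\operatorname{Lip}_y(c)$.

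What you actually bounded is $\tilde h^{n+1}=h^{n+1}+a_n=h_{\tilde\psi^n}$. By contrast, $h^n+a_n=h_{\psi^{n-1}-\ell_n}$. The good gauge $\ell_n$ is chosen for $\psi^n$, so it gives no sup bound on $\psi^{n-1}-\ell_n$. Nothing in your argument controls $h^n+a_n$. Consequently the following are not justified as written: the Arzel\`a--Ascoli step for $\tilde\psi^n$, the uniform convergence of the densities, and the conclusion $\psi^\star\in\operatorname{Lip}(\mathcal Y)$.

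The paper sidesteps this by shifting one index. It keeps the good gauge $\ell_n$ and extracts a limit of $\pi^{n+2,n+1}$. The potentials $\varphi^{n+2,[n]}$, $\psi^{n+1,[n]}$, $h^{n+2,[n]}$ are all produced by one or two Sinkhorn updates from the sup-bounded input $\psi^{n,[n]}$. Two applications of \cref{prop:multiplier-lipschitz} then give every sup and Lipschitz bound (\cref{prop:uniform-two-step}).

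You can also repair your version directly, for $n\geq1$:
\begin{enumerate}
\item By gauge invariance, $\psi^n-\ell_{n-1}=\mathcal S(\psi^{n-1}-\ell_{n-1})$, which is bounded by a constant depending only on $C_\psi$.
\item Combined with $\|\psi^n-\ell_n\|_{C(\mathcal Y)}\leq C_\psi$, this bounds $\|\ell_n-\ell_{n-1}\|_{C(\mathcal Y)}$ uniformly.
\item Since $\mathcal Y$ contains a ball of radius $\rho$, this bounds the slopes: $|a_n-a_{n-1}|$ is uniformly bounded.
\item Then $h^n+a_n=h_{\psi^{n-1}-\ell_{n-1}}+(a_n-a_{n-1})$ is uniformly bounded, which gives the missing uniform Lipschitz bound on $\tilde\psi^n$.
\end{enumerate}
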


The uniform potential bounds and the stability result of \cref{thm:stability} allow us to prove the exponential convergence of the Sinkhorn algorithm in relative entropy. More precisely, we show that 
\cref{thm:stability} can be applied with 
a constant independent of \(n\), bounding
\(H(\pi^\star\mid\pi^{n+1,n})\) by a fixed multiple of
\(H(\nu\mid\nu^{n+1,n})\). The latter is a term in the full-step
entropy decrement (\cref{prop:half-step-identities}). Hence each
iteration removes at least a fixed fraction of the remaining entropy
error.

\begin{theorem}[Exponential convergence in relative entropy]
\label{thm:exponential-halfstep}
Under Assumption~\ref{ass:main} and \cref{ass:two-step}, let \(\pi^\star\) be the optimizer furnished by \cref{thm:optimizer_representation}. Then there exists \(\alpha\in(0,1)\) such that
\begin{equation}
H\bigl(\pi^\star\mid\pi^{n+1,n}\bigr)
\leq
\alpha^n H\bigl(\pi^\star\mid\pi^{1,0}\bigr),
\qquad n\geq0.
\label{eq:exponential-halfstep}
\end{equation}
\end{theorem}

\section{Entropy identities and optimality}\label{sec:general-results}

In this section we prove the entropy identities for the two
half-steps in \cref{def:intermediate-measures} and an optimality
criterion for couplings with an exponential density representation. These
results use \cref{ass:main} and the finite-step conditions below, but
not the compactness or Lipschitz conditions in \cref{ass:two-step}.

\begin{definition}[Entropy-admissible martingale Sinkhorn iteration]
\label{def:entropy-admissible-iteration}
We call the martingale Sinkhorn iteration in \cref{def:martingale-sinkhorn}
\emph{entropy-admissible} if, for every $n\geq0$, the following properties hold.
\begin{enumerate}
\item There exists a finite Borel measurable function
$h^{n+1}:\mathbb{R}^d\to\mathbb{R}^d$ such that, for $\mu$-almost every $x$,
the integral in \cref{eq:h-update} is absolutely convergent and
\cref{eq:h-update} holds.

\item The integrals on the right-hand sides of
\cref{eq:varphi-update,eq:psi-update} are finite and strictly positive for
$\mu$-almost every $x$ and $\nu$-almost every $y$, respectively, and the
resulting functions $\varphi^{n+1}$ and $\psi^{n+1}$ admit finite Borel
measurable representatives.

\item At every finite step,
\[
\varphi^n\in L^\infty(\mu),
\qquad
\psi^n\in L^\infty(\nu),
\qquad
h^n\in L^\infty(\mu;\mathbb{R}^d).
\]
These bounds may depend on $n$.
\end{enumerate}
\end{definition}

For an entropy-admissible iteration, multiplying the integral in \cref{eq:h-update}
by a strictly positive function of
\(x\) does not change its zero set. Thus replacing \(\varphi^n\) by
\(\varphi^{n+1}\) in \cref{eq:h-update} is harmless, and the
martingale half-step of \cref{def:intermediate-measures} belongs to
\(\mathcal M(\mu,\cdot)\).
The update of \cref{eq:psi-update} and Tonelli's theorem give
\begin{equation}
\frac{\mathrm d\nu^{n+1,n}}{\mathrm d\nu}(y)
=e^{\psi^{n+1}(y)-\psi^n(y)}
\qquad\text{for }\nu\text{-almost every }y.
\label{eq:intermediate-marginal-density}
\end{equation}
Consequently both marginals of \(\pi^{n+1,n}\) have finite first moments.
The bounded density
\(\mathrm d\pi^{n+1,n+1}/\mathrm d\pi^{n+1,n}
=e^{\psi^n-\psi^{n+1}}\)
gives the same conclusion for the terminal-marginal half-step of
\cref{def:intermediate-measures}.

The relevant logarithmic ratios are
\begin{align}
\log\frac{\mathrm d\pi^{n+1,n}}{\mathrm d\pi^{n,n}}(x,y)
&=\varphi^n(x)-\varphi^{n+1}(x)
 +(h^n(x)-h^{n+1}(x))\cdot(y-x),
\label{eq:first-halfstep-ratio}\\
\log\frac{\mathrm d\pi^{n+1,n+1}}{\mathrm d\pi^{n+1,n}}(x,y)
&=\psi^n(y)-\psi^{n+1}(y).
\label{eq:second-halfstep-ratio}
\end{align}
The first is integrable under every martingale measure with first
marginal \(\mu\) and finite first moments; the second is bounded.
If \(\bar\pi\in\mathcal M(\mu,\nu)\) has
\(H(\bar\pi\mid R)<\infty\), the logarithm of each iterate's density
relative to \(R\) is \(\bar\pi\)-integrable. Its entropy relative to
every finite iterate is therefore finite as well.

\begin{proposition}[Half-step entropy identities and full-step decrement]
\label{prop:half-step-identities}
Suppose the martingale Sinkhorn iteration is entropy-admissible. Then, for every \(n\geq0\),
\(\pi^{n+1,n}\) and \(\pi^{n+1,n+1}\) are the unique minimizers of
\(H(\cdot\mid\pi^{n,n})\) over \(\mathcal M(\mu,\cdot)\), and of
\(H(\cdot\mid\pi^{n+1,n})\) over \(\Pi(\cdot,\nu)\), respectively.
If \(\bar\pi\in\mathcal M(\mu,\nu)\) satisfies
\(H(\bar\pi\mid R)<\infty\), then
\begin{align}
H(\bar\pi\mid\pi^{n,n})
&=H(\bar\pi\mid\pi^{n+1,n})
 +H(\pi^{n+1,n}\mid\pi^{n,n}),
\label{eq:pythagorean-first}\\
H(\bar\pi\mid\pi^{n+1,n})
&=H(\bar\pi\mid\pi^{n+1,n+1})
 +H(\pi^{n+1,n+1}\mid\pi^{n+1,n}).
\label{eq:pythagorean-second}
\end{align}
In particular,
\begin{align}
&H(\bar\pi\mid\pi^{n+1,n})-H(\bar\pi\mid\pi^{n+2,n+1})\notag\\
&\qquad=H(\pi^{n+2,n+1}\mid\pi^{n+1,n+1})
 +H(\pi^{n+1,n+1}\mid\pi^{n+1,n}).
\label{eq:full-step-decrement}
\end{align}
The second term has the exact marginal representation
\begin{equation}
H(\pi^{n+1,n+1}\mid\pi^{n+1,n})
=H(\nu\mid\nu^{n+1,n}).
\label{eq:exact-marginal-decrement}
\end{equation}
The marginal errors also satisfy
\begin{equation}
\begin{aligned}
\max\bigl\{H(\mu\mid\mu^{n+1,n+1}),\,
H(\nu^{n+2,n+1}\mid\nu)\bigr\}
&\leq H(\pi^{n+2,n+1}\mid\pi^{n+1,n+1}),\\
\max\bigl\{H(\mu^{n+1,n+1}\mid\mu),\,
H(\nu\mid\nu^{n+1,n})\bigr\}
&\leq H(\pi^{n+1,n+1}\mid\pi^{n+1,n}).
\end{aligned}
\label{eq:marginal-data-processing}
\end{equation}
\end{proposition}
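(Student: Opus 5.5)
The plan is to read everything off the explicit log-density ratios \eqref{eq:first-halfstep-ratio} and \eqref{eq:second-halfstep-ratio}, together with the constraints satisfied by each half-step, and to verify the Pythagorean identities by direct integration. All the quantities integrated are integrable by entropy-admissibility: $\varphi^n,\psi^n,h^n$ are bounded, and the marginals have finite first moments.

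\textbf{First identity.} For any $\bar\pi\in\mathcal M(\mu,\cdot)$ with $H(\bar\pi\mid\pi^{n,n})<\infty$, I will write
\[
H(\bar\pi\mid\pi^{n,n})=H(\bar\pi\mid\pi^{n+1,n})+\int\log\frac{\mathrm d\pi^{n+1,n}}{\mathrm d\pi^{n,n}}\,\mathrm d\bar\pi .
\]
By \eqref{eq:first-halfstep-ratio}, the last integral is $\int(\varphi^n-\varphi^{n+1})\,\mathrm d\mu+\int (h^n-h^{n+1})(x)\cdot(y-x)\,\bar\pi(\mathrm dx,\mathrm dy)$. The martingale term vanishes for every element of $\mathcal M(\mu,\cdot)$, since $h^n-h^{n+1}$ is bounded and may be used as $\zeta$. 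So the integral depends only on the first marginal $\mu$. Applying the same computation with $\bar\pi=\pi^{n+1,n}$ itself, which lies in $\mathcal M(\mu,\cdot)$, identifies this constant as $H(\pi^{n+1,n}\mid\pi^{n,n})$. This gives \eqref{eq:pythagorean-first}, and uniqueness of the minimizer follows at once because $H(\bar\pi\mid\pi^{n+1,n})>0$ unless $\bar\pi=\pi^{n+1,n}$.

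\textbf{Second identity.} The same argument with \eqref{eq:second-halfstep-ratio} works over $\Pi(\cdot,\nu)$. There the integral $\int(\psi^n-\psi^{n+1})\,\mathrm d\nu$ depends only on the second marginal, which yields \eqref{eq:pythagorean-second} and the second minimization claim. A $\bar\pi\in\mathcal M(\mu,\nu)$ with $H(\bar\pi\mid R)<\infty$ lies in both classes, and its entropies relative to the iterates are finite by the remark preceding the proposition, so both identities apply to it.

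\textbf{Decrement and marginal bounds.} Combining \eqref{eq:pythagorean-second} at index $n$ with \eqref{eq:pythagorean-first} at index $n+1$, and subtracting the finite quantities, gives \eqref{eq:full-step-decrement}. For \eqref{eq:exact-marginal-decrement}, note that $\mathrm d\pi^{n+1,n+1}/\mathrm d\pi^{n+1,n}=e^{\psi^n-\psi^{n+1}}(y)$ is a function of $y$ only. By \eqref{eq:intermediate-marginal-density}, this equals $\mathrm d\nu/\mathrm d\nu^{n+1,n}$, so integrating its logarithm against $\pi^{n+1,n+1}$, whose $y$-marginal is $\nu$, gives $H(\nu\mid\nu^{n+1,n})$.

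The bounds \eqref{eq:marginal-data-processing} are data processing, i.e. monotonicity of relative entropy under the coordinate projections. The $x$-marginals of $\pi^{n+2,n+1}$ and $\pi^{n+1,n+1}$ are $\mu$ and $\mu^{n+1,n+1}$, and their $y$-marginals are $\nu^{n+2,n+1}$ and $\nu$. Likewise $\pi^{n+1,n+1}$ and $\pi^{n+1,n}$ have $x$-marginals $\mu^{n+1,n+1}$ and $\mu$, and $y$-marginals $\nu$ and $\nu^{n+1,n}$.

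\textbf{Main obstacle.} The only delicate point is integrability. The ratio in \eqref{eq:first-halfstep-ratio} is not bounded, because of the factor $y-x$, so I must check that it is $\bar\pi$-integrable whenever $\bar\pi$ has finite first moments. I must also justify splitting $\int\log(\mathrm d\bar\pi/\mathrm d\pi^{n,n})\,\mathrm d\bar\pi$ into two terms when one of the entropies might be infinite. I will handle this by first noting that the ratio is integrable, so the two entropies are finite simultaneously and the splitting is legitimate. When minimizing over $\mathcal M(\mu,\cdot)$, the identity then shows that any competitor with finite entropy has value at least $H(\pi^{n+1,n}\mid\pi^{n,n})$.
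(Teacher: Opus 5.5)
Your proposal is correct and follows essentially the same route as the paper. You use the chain rule with the explicit log-ratios \eqref{eq:first-halfstep-ratio} and \eqref{eq:second-halfstep-ratio}, cancel the martingale and marginal terms, identify the constant by testing the half-step itself, then combine the two identities and apply data processing to the coordinate projections. Your explicit integrability check on the unbounded first ratio (so the two entropies are finite simultaneously) spells out what the paper handles in the remark just before the proposition.
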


\begin{proof}
Let \(\gamma\in\mathcal M(\mu,\cdot)\) with
\(H(\gamma\mid\pi^{n,n})<\infty\).
By \cref{eq:first-halfstep-ratio}, the entropy chain rule and the
martingale constraint give
\[
H(\gamma\mid\pi^{n,n})
=H(\gamma\mid\pi^{n+1,n})
 +\int(\varphi^n-\varphi^{n+1})\,\mathrm d\mu.
\]
The last integral equals
\(H(\pi^{n+1,n}\mid\pi^{n,n})<\infty\), as seen by evaluating the
same logarithmic ratio under \(\pi^{n+1,n}\).
For \(\gamma\in\Pi(\cdot,\nu)\) with \(H(\gamma\mid\pi^{n+1,n})<\infty\), the bounded
ratio in \cref{eq:second-halfstep-ratio} similarly gives
\[
H(\gamma\mid\pi^{n+1,n})
=H(\gamma\mid\pi^{n+1,n+1})
 +\int(\psi^n-\psi^{n+1})\,\mathrm d\nu,
\]
where the last integral equals
\(H(\pi^{n+1,n+1}\mid\pi^{n+1,n})\).
The nonnegativity of relative entropy, with equality only for identical
probability measures, gives both minimization claims and uniqueness.

Taking \(\gamma=\bar\pi\) gives
\cref{eq:pythagorean-first,eq:pythagorean-second}.
Adding the second identity at time \(n\) and the first at time \(n+1\)
proves \cref{eq:full-step-decrement}. Finally,
\cref{eq:intermediate-marginal-density,eq:second-halfstep-ratio} show that
\[
\log\frac{\mathrm d\pi^{n+1,n+1}}{\mathrm d\pi^{n+1,n}}(x,y)
=\log\frac{\mathrm d\nu}{\mathrm d\nu^{n+1,n}}(y).
\]
Integration under the coupling with second marginal \(\nu\) proves
\cref{eq:exact-marginal-decrement}. The bounds in
\cref{eq:marginal-data-processing} follow by applying data processing inequality
to the coordinate projections.
\end{proof}

For a martingale coupling with exponential representation, the integral of its
log-density is the same under every coupling satisfying the marginal and martingale
constraints. This property, combined with the entropy chain rule, allows to obtain an optimality criterion.

\begin{theorem}[Pythagorean identity for EMOT]
\label{thm:general-pythagorean}
Let $\pi^\star\in\mathcal{M}(\mu,\nu)$ and
$R\in\mathcal{P}(\mathbb{R}^d\times\mathbb{R}^d)$. Assume there are finite Borel representatives with
\[
\varphi\in L^1(\mu),
\qquad
\psi\in L^1(\nu),
\qquad
h\in L^\infty(\mu;\mathbb{R}^d)
\]
such that
\begin{equation}
\frac{\mathrm{d}\pi^\star}{\mathrm{d}R}(x,y)
=
\exp\bigl(-\varphi(x)-\psi(y)-h(x)\cdot(y-x)\bigr).
\label{eq:general-exponential-representation}
\end{equation}
Then $H(\pi^\star\mid R)<+\infty$
and for every $\pi\in\mathcal{M}(\mu,\nu)$ with
$H(\pi\mid R)<+\infty$,
\begin{equation}
H(\pi\mid R)
=
H\bigl(\pi\mid\pi^\star\bigr)
+
H\bigl(\pi^\star\mid R\bigr).
\label{eq:general-pythagorean}
\end{equation}
In particular, \(\pi^\star\) is the unique minimizer of the EMOT
problem \eqref{eq:emot}.
\end{theorem}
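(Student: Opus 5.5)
The plan is to show that the logarithmic density
\[
L(x,y):=\log\frac{\mathrm d\pi^\star}{\mathrm dR}(x,y)=-\varphi(x)-\psi(y)-h(x)\cdot(y-x)
\]
has the same integral under every \(\pi\in\mathcal M(\mu,\nu)\) with \(H(\pi\mid R)<\infty\). The identity \eqref{eq:general-pythagorean} will then follow from the chain rule for relative entropy.

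\textbf{Step 1: \(L\) integrates to a constant on \(\mathcal M(\mu,\nu)\).} Let \(\pi\in\mathcal M(\mu,\nu)\). Since \(\varphi\in L^1(\mu)\) and \(\psi\in L^1(\nu)\), the marginal constraints give
\[
\int\varphi(x)\,\pi(\mathrm dx,\mathrm dy)=\int\varphi\,\mathrm d\mu,
\qquad
\int\psi(y)\,\pi(\mathrm dx,\mathrm dy)=\int\psi\,\mathrm d\nu.
\]
For the martingale term, \(h\) is bounded and \(|y-x|\) is \(\pi\)-integrable because \(\mu,\nu\in\mathcal P_1\). Hence \(h(x)\cdot(y-x)\in L^1(\pi)\), and taking \(\zeta=h\) in the definition of \(\mathcal M(\mu,\nu)\) makes its integral vanish. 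Thus \(L\in L^1(\pi)\) and
\[
\int L\,\mathrm d\pi=-\int\varphi\,\mathrm d\mu-\int\psi\,\mathrm d\nu=:-K
\]
is independent of \(\pi\). Applying this with \(\pi=\pi^\star\) gives \(H(\pi^\star\mid R)=\int L\,\mathrm d\pi^\star=-K<\infty\). This uses that \(\pi^\star\ll R\) with density \(e^{L}\), which is the meaning of \eqref{eq:general-exponential-representation}.

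\textbf{Step 2: chain rule.} Fix \(\pi\in\mathcal M(\mu,\nu)\) with \(H(\pi\mid R)<\infty\). Then \(\pi\ll R\). Because \(\mathrm d\pi^\star/\mathrm dR=e^{L}>0\) everywhere, \(R\ll\pi^\star\), and therefore \(\pi\ll\pi^\star\) with
\[
\frac{\mathrm d\pi}{\mathrm d\pi^\star}=\frac{\mathrm d\pi}{\mathrm dR}\,e^{-L}.
\]
Taking logarithms gives, \(\pi\)-almost everywhere,
\[
\log\frac{\mathrm d\pi}{\mathrm dR}=\log\frac{\mathrm d\pi}{\mathrm d\pi^\star}+L.
\]
The hard part is the integrability bookkeeping, since the log-densities are not bounded. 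We know \(L\in L^1(\pi)\) by Step 1. The positive part of \(\log(\mathrm d\pi/\mathrm dR)\) is \(\pi\)-integrable, and its integral is finite because \(H(\pi\mid R)<\infty\). Hence \(\log(\mathrm d\pi/\mathrm d\pi^\star)\) has integrable positive part, so its integral \(H(\pi\mid\pi^\star)\in[0,\infty]\) is well defined. We can then integrate the identity termwise to get
\[
H(\pi\mid R)=H(\pi\mid\pi^\star)+\int L\,\mathrm d\pi=H(\pi\mid\pi^\star)+H(\pi^\star\mid R).
\]
In particular \(H(\pi\mid\pi^\star)\) is finite. For this step I would invoke the standard fact that \(\int f\log f\,\mathrm dR\) is well defined whenever it is finite, and the rule that a sum may be integrated termwise when one summand is integrable.

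\textbf{Step 3: minimality and uniqueness.} By \eqref{eq:general-pythagorean}, \(H(\pi\mid R)\ge H(\pi^\star\mid R)\) for every finite-entropy feasible \(\pi\). Couplings with infinite entropy are trivially no better. Equality forces \(H(\pi\mid\pi^\star)=0\), hence \(\pi=\pi^\star\). Therefore \(\pi^\star\) is the unique minimizer of \eqref{eq:emot}.
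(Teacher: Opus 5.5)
Your proposal is correct and follows the paper's proof essentially step for step. The paper likewise uses the $L^1/L^\infty$ integrability bound together with the marginal and martingale constraints to show $\int\log(\mathrm d\pi^\star/\mathrm dR)\,\mathrm d\gamma=-\int\varphi\,\mathrm d\mu-\int\psi\,\mathrm d\nu$ for every $\gamma\in\mathcal M(\mu,\nu)$, then applies the chain rule using $\pi^\star\sim R$, and uses nonnegativity of relative entropy for optimality and uniqueness; your extra bookkeeping on the log-densities only spells out what the paper compresses into ``the same integrability estimate justifies the entropy chain rule.''
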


\begin{proof}
For every \(\gamma\in\mathcal M(\mu,\nu)\),
\[
\int\!\bigl|\varphi(x)+\psi(y)+h(x)\cdot(y-x)\bigr|\,\mathrm d\gamma
\leq \|\varphi\|_{L^1(\mu)}+\|\psi\|_{L^1(\nu)}
+\|h\|_{L^\infty(\mu)}
\left(\int |x|\,\mathrm d\mu+\int |y|\,\mathrm d\nu\right)<\infty.
\]
The marginal and martingale constraints therefore give
\[
\int\log\frac{\mathrm d\pi^\star}{\mathrm dR}\,\mathrm d\gamma
=-\int\varphi\,\mathrm d\mu-\int\psi\,\mathrm d\nu.
\]
Taking \(\gamma=\pi^\star\) shows that the right-hand side is
\(H(\pi^\star\mid R)<\infty\).

The exponential density is strictly positive, so \(\pi^\star\sim R\).
For \(\pi\in\mathcal M(\mu,\nu)\) with \(H(\pi\mid R)<\infty\),
the same integrability estimate justifies the entropy chain rule:
\[
H(\pi\mid R)
=H(\pi\mid\pi^\star)
 +\int\log\frac{\mathrm d\pi^\star}{\mathrm dR}\,\mathrm d\pi
=H(\pi\mid\pi^\star)+H(\pi^\star\mid R).
\]
Nonnegativity of relative entropy proves optimality. Any other
minimizer has finite entropy and satisfies \(H(\pi\mid\pi^\star)=0\),
hence equals \(\pi^\star\).
\end{proof}

For the martingale half-step \(\pi^{n+1,n}\) of
\cref{def:intermediate-measures}, the first marginal and the martingale
constraints are satisfied. The same optimality criterion therefore applies
with \(\nu\) replaced by \(\nu^{n+1,n}\).

\begin{corollary}[Optimality of the intermediate martingale coupling]
\label{cor:intermediate-optimality}
Under \cref{ass:main}, suppose that the martingale Sinkhorn iteration is
entropy-admissible in the sense of
\cref{def:entropy-admissible-iteration}. Then, for every $n\geq 0$,
$\pi^{n+1,n}$ is the optimal martingale coupling between
$\mu$ and $\nu^{n+1,n}$ under the reference measure $R$.
\end{corollary}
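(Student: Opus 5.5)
The plan is to apply \cref{thm:general-pythagorean} with \(\nu\) replaced by \(\nu^{n+1,n}\) and with the same reference measure \(R\). Fix \(n\geq0\).

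First, \(\pi^{n+1,n}\in\mathcal M(\mu,\nu^{n+1,n})\). By \cref{def:intermediate-measures} and the discussion after \cref{def:entropy-admissible-iteration}, \(\pi^{n+1,n}\in\mathcal M(\mu,\cdot)\), and by definition its second marginal is \(\nu^{n+1,n}\). The first-moment property is already recorded in that discussion: \(\nu^{n+1,n}\) has bounded density \(e^{\psi^{n+1}-\psi^n}\) with respect to \(\nu\) by \cref{eq:intermediate-marginal-density}, and \(\nu\in\mathcal P_1\). Hence \(\mu,\nu^{n+1,n}\in\mathcal P_1(\mathbb R^d)\), and \(\mathcal M(\mu,\nu^{n+1,n})\) makes sense.

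Second, the exponential representation holds with the right integrability. By definition,
\[
\frac{\mathrm d\pi^{n+1,n}}{\mathrm dR}
=\exp\bigl(-\varphi^{n+1}(x)-\psi^n(y)-h^{n+1}(x)\cdot(y-x)\bigr),
\]
which is obtained by dividing the density with respect to \(\mu\otimes\nu\) by \(e^{-c}\). Entropy-admissibility (item 3, applied at index \(n+1\) and at index \(n\)) gives \(\varphi^{n+1}\in L^\infty(\mu)\subset L^1(\mu)\), \(h^{n+1}\in L^\infty(\mu;\mathbb R^d)\) and \(\psi^n\in L^\infty(\nu)\). The one point needing care is that \(\psi^n\) must lie in \(L^1(\nu^{n+1,n})\) rather than \(L^1(\nu)\). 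This holds because \(\nu^{n+1,n}\ll\nu\), so \(\psi^n\) is also \(\nu^{n+1,n}\)-essentially bounded. Measurability and finiteness of the representatives come from items 1--2. The integrability estimate in the proof of \cref{thm:general-pythagorean} uses only these bounds and the first moments of \(\mu\) and \(\nu^{n+1,n}\).

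Third, \cref{thm:general-pythagorean} applied with the pair \((\mu,\nu^{n+1,n})\) gives \(H(\pi^{n+1,n}\mid R)<\infty\) and the Pythagorean identity for every \(\pi\in\mathcal M(\mu,\nu^{n+1,n})\) with finite entropy. Hence \(\pi^{n+1,n}\) is the unique minimizer of \(H(\cdot\mid R)\) over \(\mathcal M(\mu,\nu^{n+1,n})\).

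The main (mild) obstacle is the second step: checking that the hypotheses of \cref{thm:general-pythagorean} hold with respect to the new marginal \(\nu^{n+1,n}\) instead of \(\nu\). This reduces to the absolute continuity \(\nu^{n+1,n}\ll\nu\) given by \cref{eq:intermediate-marginal-density}. Nothing from \cref{ass:two-step} is needed.
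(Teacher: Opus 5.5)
Your proposal is correct and takes the same approach as the paper: apply \cref{thm:general-pythagorean} with terminal marginal \(\nu^{n+1,n}\), using the logarithmic density \(-\varphi^{n+1}(x)-\psi^n(y)-h^{n+1}(x)\cdot(y-x)\) relative to \(R\) together with the bounds from entropy-admissibility. You also make explicit a point the paper leaves implicit: \(\psi^n\in L^\infty(\nu)\) gives \(\psi^n\in L^1(\nu^{n+1,n})\) because \(\nu^{n+1,n}\ll\nu\) by \cref{eq:intermediate-marginal-density}.
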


\begin{proof}
The intermediate coupling has finite first moments, as shown after
\cref{def:entropy-admissible-iteration}, and its logarithmic density relative
to \(R\) is
\(-\varphi^{n+1}(x)-\psi^n(y)-h^{n+1}(x)\cdot(y-x)\).
The entropy-admissibility of the martingale Sinkhorn iteration ensures that the potentials satisfy the conditions of
\cref{thm:general-pythagorean}, with terminal marginal \(\nu^{n+1,n}\).
\cref{thm:general-pythagorean} then gives optimality and uniqueness.
\end{proof}

\section{Uniform estimates and existence of the optimizer}\label{sec:two-step-estimates}

The main estimate of this section is a uniform, modulo affine gauges, bound on the dual potentials associated with the terminal marginal constraint. We obtain it from the strict convex order condition and
the upper bound on a decreasing functional. The bound on the potentials also
gives a lower bound on the same functional, which proves marginal
convergence. Finally, regularity of the updates lets us pass to a limit
and obtain the optimizer.

\subsection{Well-posedness and affine gauges}

In this section we will establish bounds on the dual potentials using their integral representations. We will sometimes restrict the integrals with respect to the measure $\nu(dy)$ to the sets of the form $ B(y, r) $ and  \(\{y:e\cdot(y-x)\leq \tilde r\}\) for given $r, \tilde r, e, x$. We want the $\nu$-measure of these sets to be strictly positive to obtain lower bounds. For that purpose choose
 \(\rho>0\) as in \cref{eq:uniform-interior-ball}.
By \cref{lem:compact-support-nondegeneracy},
\begin{equation}
m_\nu(r):=\inf_{y\in\mathcal Y}\nu(B(y,r))>0
\qquad\text{for every fixed }r>0.
\label{eq:small-ball-mass}
\end{equation}
For every \(x\in\mathcal X\) and \(e\in\mathbb S^{d-1}\), the ball
\(B(x-\rho e/2,\rho/4)\) is contained in \(\mathcal Y\) and in
\(\{y:e\cdot(y-x)\leq-\rho/4\}\). Its center belongs to
\(\mathcal Y=\operatorname{supp}\nu\), so
\begin{equation}
\label{eq:nu_set_lower_bound}
\nu\bigl(\{y\in\mathcal Y:e\cdot(y-x)\leq-\rho/4\}\bigr)\geq m_\nu(\rho/4).
\end{equation}

The estimates in this section will be stated for a general
potential $\psi$, not necessarily coming from the martingale Sinkhorn iteration. We therefore introduce Sinkhorn operators, allowing us to write one
complete update as \(\psi\mapsto(h_\psi,\varphi_\psi,\mathcal S\psi)\).

\begin{definition}[Martingale Sinkhorn operators]
\label{def:operator-martingale-sinkhorn}
For every function $\psi:\mathbb{R}^d\to\mathbb{R}$ for which the following construction is
well defined, let $h_\psi:\mathbb{R}^d\to\mathbb{R}^d$ be specified by
\begin{equation}
0
=
\int_{\mathbb{R}^d}
(y-x)
\exp\bigl(
-\psi(y)
-h_\psi(x)\cdot(y-x)
-c(x,y)
\bigr)
\,\nu(\,\mathrm{d} y)
\qquad\text{for }\mu\text{-almost every }x\in\mathbb{R}^d.
\label{eq:operator-h-update}
\end{equation}
When $h_{\psi}$ is not unique, fix a measurable choice.
Under Assumption~\ref{ass:two-step}, uniqueness follows from
\cref{prop:compact-well-posedness} below, after the reduction of
\cref{rem:compact-support-consequences}.

Next, define $\varphi_\psi:\mathbb{R}^d\to\mathbb{R}$ by
\begin{equation}
\varphi_\psi(x)
:=
\log\!\left(
\int_{\mathbb{R}^d}
\exp\bigl(
-\psi(y)
-h_\psi(x)\cdot(y-x)
-c(x,y)
\bigr)
\,\nu(\,\mathrm{d} y)
\right).
\label{eq:operator-varphi-update}
\end{equation}

Finally, define the operator $\mathcal S$ by
\begin{equation}
(\mathcal S\psi)(y)
:=
\log\!\left(
\int_{\mathbb{R}^d}
\exp\bigl(
-\varphi_\psi(x)
-h_\psi(x)\cdot(y-x)
-c(x,y)
\bigr)
\,\mu(\,\mathrm{d} x)
\right).
\label{eq:operator-psi-update}
\end{equation}

With the initialization in \cref{def:martingale-sinkhorn}, the martingale Sinkhorn iteration
takes the form
\[
h^{n+1}=h_{\psi^n},\qquad
\varphi^{n+1}=\varphi_{\psi^n},\qquad
\psi^{n+1}=\mathcal S\psi^n,\qquad n\geq0.
\]
\end{definition}

We now show that for fixed \(x\), \cref{eq:operator-h-update} is the first-order optimality condition
for a strictly convex functional. The support condition makes this
functional coercive and gives a bound on the functional that is linear in
\(\|\psi\|_{C(\mathcal Y)}\). This linear bound will be used to control
the sequence of Sinkhorn iterates.

\begin{proposition}[Well-posedness of the iteration]
\label{prop:compact-well-posedness}
Under \cref{ass:main} and \cref{ass:two-step}, for every
$\psi\in C(\mathcal{Y})$ and every $x\in\mathcal{X}$, the martingale equation
\begin{equation*}
0
=
\int_\mathcal{Y}(y-x)
\exp\bigl(-\psi(y)-h_\psi(x)\cdot(y-x)-c(x,y)\bigr)
\,\nu(\,\mathrm{d}y)
\end{equation*}
admits a unique solution $h_\psi(x)\in\mathbb{R}^d$. Moreover,
\[
h_\psi\in C(\mathcal{X};\mathbb{R}^d),
\qquad
\varphi_\psi\in C(\mathcal{X}),
\qquad
\mathcal{S}\psi\in C(\mathcal{Y}),
\]
and all three functions are bounded on their respective domains. In addition,
there is \(C_h>0\), depending only on \(\rho\), \(m_\nu(\rho/4)\), and
\(\|c\|_{C(\mathcal X\times\mathcal Y)}\), such that
\begin{equation}
\|h_\psi\|_{C(\mathcal X)}
\leq C_h\bigl(1+\|\psi\|_{C(\mathcal Y)}\bigr).
\label{eq:linear-multiplier-bound}
\end{equation}
Then the martingale Sinkhorn
iteration is entropy-admissible.
\end{proposition}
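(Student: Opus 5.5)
For fixed \(x\in\mathcal X\) and \(\psi\in C(\mathcal Y)\), I will introduce the functional
\[
F_x(h):=\int_{\mathcal Y}\exp\bigl(-\psi(y)-h\cdot(y-x)-c(x,y)\bigr)\,\nu(\mathrm dy),\qquad h\in\mathbb R^d.
\]
Since \(\mathcal Y\) is compact and \(\psi,c\) are bounded, \(F_x\) is finite and smooth, with derivatives obtained by differentiating under the integral. Its gradient is exactly minus the integral in the martingale equation. Its Hessian is \(\int (y-x)(y-x)^{\top}e^{\cdots}\,\nu(\mathrm dy)\), which is positive definite: after the reduction of \cref{rem:compact-support-consequences}, \(\nu\) is not supported on any hyperplane through \(x\). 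This follows from \eqref{eq:nu_set_lower_bound}. Hence \(F_x\) is strictly convex, and solutions of the martingale equation are exactly minimizers of \(F_x\).

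For coercivity, write \(h=|h|e\) and restrict the integral to \(\{e\cdot(y-x)\le-\rho/4\}\). By \eqref{eq:nu_set_lower_bound} this gives
\[
F_x(h)\ge e^{-\|\psi\|_\infty-\|c\|_\infty}\,m_\nu(\rho/4)\,e^{|h|\rho/4}.
\]
Thus a unique minimizer \(h_\psi(x)\) exists. Comparing with \(F_x(0)\le e^{\|\psi\|_\infty+\|c\|_\infty}\) yields
\[
|h_\psi(x)|\,\rho/4\le 2\|\psi\|_\infty+2\|c\|_\infty-\log m_\nu(\rho/4).
\]
This is \eqref{eq:linear-multiplier-bound}, with \(C_h\) depending only on the stated quantities.

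For continuity of \(h_\psi\) in \(x\), I will use the joint continuity of \((x,h)\mapsto F_x(h)\). The Lipschitz property of \(c\) in \(x\) and dominated convergence give this. Combined with the uniform bound on the minimizers and uniqueness, any limit point of \(h_\psi(x_k)\) as \(x_k\to x\) minimizes \(F_x\), so it equals \(h_\psi(x)\). This argmin-continuity step is the most delicate point: it needs joint continuity together with a locally uniform coercivity bound, both of which the estimates above supply. It follows that \(\varphi_\psi=\log F_x(h_\psi(x))\) is continuous. It is also bounded, since \(F_x(h_\psi(x))\) lies between \(e^{-\|\psi\|_\infty-\|c\|_\infty-2\,\mathrm{diam}(\mathcal Y)\|h_\psi\|_\infty}\) and \(F_x(0)\).

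The function \(\mathcal S\psi\) is then the logarithm of an integral of a continuous, bounded, strictly positive integrand over compact \(\mathcal X\). It is therefore continuous and bounded on \(\mathcal Y\); continuity in \(y\) uses the Lipschitz property of \(c(x,\cdot)\) and the boundedness of \(h_\psi\).

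Entropy-admissibility then follows by induction. Since \(\psi^0=0\in C(\mathcal Y)\), each step produces \(h^{n+1},\varphi^{n+1}\) continuous and bounded on \(\mathcal X\) and \(\psi^{n+1}\in C(\mathcal Y)\). The integrals are absolutely convergent, finite and positive, and the functions are bounded, so conditions 1–3 of \cref{def:entropy-admissible-iteration} hold. Measurable representatives on \(\mathbb R^d\) are obtained by extending the functions by zero off the supports. The presence of \(\varphi^n\) in \eqref{eq:h-update} only multiplies the equation by a positive factor, so it does not affect the solution.
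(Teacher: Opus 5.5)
Your proposal is correct and takes essentially the paper's route. Both arguments rest on strict convexity and coercivity of the per-$x$ functional, obtained via \eqref{eq:nu_set_lower_bound}. Both compare with $h=0$ to get the same linear bound, identify limit points of minimizers using joint continuity and uniqueness, and finish with induction for entropy-admissibility. The only cosmetic difference is that you minimize the integral $F_x$ itself rather than its logarithm $F_\psi(x,\cdot)$. Your Hessian is therefore a second-moment matrix about $x$ rather than the covariance of $Q_{x,p}$, and the minimizer is the same.
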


\begin{proof}
Fix $\psi\in C(\mathcal{Y})$ and set
\[
M:=\lVert\psi\rVert_{C(\mathcal{Y})},
\qquad
C_c:=\lVert c\rVert_{C(\mathcal{X}\times\mathcal{Y})}.
\]
For $x\in\mathcal{X}$ and $p\in\mathbb{R}^d$, define
\begin{equation}
\label{eq:F_psi_functional}
F_\psi(x,p)
:=
\log\!\left(
\int_\mathcal{Y}
\exp\bigl(-\psi(y)-p\cdot(y-x)-c(x,y)\bigr)
\,\nu(\,\mathrm{d}y)
\right).
\end{equation}
The function $F_\psi$ is jointly continuous. Since $\mathcal{Y}$ is compact,
differentiation with respect to $p$ gives
\[
\nabla_pF_\psi(x,p)
=
x-\int_\mathcal{Y}y\,Q_{x,p}(\,\mathrm{d}y),
\qquad
\nabla_{pp}^2F_\psi(x,p)
=
\operatorname{Cov}_{Q_{x,p}}(Y),
\]
where
\[
Q_{x,p}(\,\mathrm{d}y)
:=
\frac{
\exp\bigl(-\psi(y)-p\cdot(y-x)-c(x,y)\bigr)
}{
\displaystyle
\int_\mathcal{Y}
\exp\bigl(-\psi(z)-p\cdot(z-x)-c(x,z)\bigr)
\,\nu(\,\mathrm{d}z)
}
\,\nu(\,\mathrm{d}y).
\]

For $p\neq0$, let $e=p/\lvert p\rvert$. 
Restricting the integral in \cref{eq:F_psi_functional} to the set
$-p\cdot(y-x)\geq \frac{\rho}{4}\lvert p\rvert$ and using \cref{eq:nu_set_lower_bound}, we obtain
\begin{equation*}
F_\psi(x,p)
\geq
-M-C_c+\frac{\rho}{4}\lvert p\rvert+\log m_\nu(\rho/4).
\end{equation*}
Thus $F_\psi(x,\cdot)$ is coercive and attains a minimum. The measure
$Q_{x,p}$ is equivalent to $\nu$, hence has support $\mathcal Y$.
Since \(\mathcal Y\) is full-dimensional, no nonzero linear functional
is constant \(Q_{x,p}\)-almost surely. Its covariance matrix is therefore
positive definite. Hence
$F_\psi(x,\cdot)$ is strictly convex and its minimizer is unique. Denote the
minimizer by $h_\psi(x)$. Its
first-order optimality condition is exactly \cref{eq:operator-h-update}.

The minimizer is bounded uniformly in $x$. Indeed,
\[
F_\psi\bigl(x,h_\psi(x)\bigr)
\leq
F_\psi(x,0)
\leq
M+C_c,
\]
which, together with the preceding lower bound, gives
\begin{equation}
\lvert h_\psi(x)\rvert
\leq
\frac4\rho\bigl(2M+2C_c-\log m_\nu(\rho/4)\bigr)
\qquad
\text{for every }x\in\mathcal{X}.
\label{eq:finite-step-h-bound}
\end{equation}
Increasing a constant depending only on the fixed quantities in this
bound gives \cref{eq:linear-multiplier-bound}.

We next prove continuity. Let $x_k\to x$ in $\mathcal{X}$. By
\cref{eq:finite-step-h-bound}, every subsequence of
$\bigl(h_\psi(x_k)\bigr)_k$ has a further convergent subsequence. If its limit
is $p$, the minimizing property and the joint continuity of $F_\psi$ imply
\[
F_\psi(x,p)\leq F_\psi(x,q)
\qquad
\text{for every }q\in\mathbb{R}^d.
\]
By uniqueness, $p=h_\psi(x)$. Thus the whole sequence converges and
$h_\psi$ is continuous.

The identity
\[
\varphi_\psi(x)=F_\psi\bigl(x,h_\psi(x)\bigr)
\]
shows that $\varphi_\psi$ is continuous and bounded on $\mathcal{X}$.
The integrand in \cref{eq:operator-psi-update} is then positive and continuous
on the compact set $\mathcal{X}\times\mathcal{Y}$, so
$\mathcal{S}\psi$ is continuous and bounded on $\mathcal{Y}$.

Starting from $\psi^0=0$ and arguing inductively, every finite iteration
therefore produces bounded continuous dual variables. All update integrals are
finite, the martingale integral is absolutely convergent, and the intermediate
measures are supported on the compact set
$\mathcal{X}\times\mathcal{Y}$. The martingale and marginal constraints give the two one-sided constraints
in \cref{def:intermediate-measures}. Hence the iteration is
entropy-admissible.
\end{proof}

Direct uniform bounds on the dual potentials are not available. We therefore introduce affine gauges and later show that the potentials are uniformly bounded after a suitable gauge transformation.

\begin{definition}[Affine gauges]
\label{def:affine-gauges}
Let
\[
\mathcal{A}(\mathcal{Y})
:=
\left\{
\ell:\mathcal{Y}\to\mathbb{R}:\ell(y)=a+b\cdot y,\ a\in\mathbb{R},\ b\in\mathbb{R}^d
\right\}.
\]
For \(\ell(y)=a+b\cdot y\in\mathcal{A}(\mathcal Y)\) and \(k\geq0\), define
\[
\varphi^{k,\ell}:=\varphi^k+\ell|_{\mathcal X},\qquad
\psi^{k,\ell}:=\psi^k-\ell,\qquad
h^{k,\ell}:=h^k+b.
\]
The first superscript records the iteration level and the second the
chosen gauge. For all \(k,m\geq0\) and \((x,y)\in\mathcal X\times\mathcal Y\),
\begin{equation}
\varphi^{k,\ell}(x)+\psi^{m,\ell}(y)+h^{k,\ell}(x)\cdot(y-x)
=
\varphi^k(x)+\psi^m(y)+h^k(x)\cdot(y-x).
\label{eq:affine-gauge-invariance}
\end{equation}
Thus the intermediate measures (half-steps) are unchanged under this gauge transformation.
\end{definition}

The same affine change can be used throughout the martingale Sinkhorn iteration. Thus we may
estimate the next iterate in a gauge selected for the current potential $\psi$.

\begin{proposition}[Affine invariance of the Sinkhorn updates]
\label{prop:gauge-preserves-sinkhorn}
Under \cref{ass:main} and \cref{ass:two-step}, let
\(\ell(y)=a+b\cdot y\in\mathcal A(\mathcal Y)\).
If \((\varphi^n,\psi^n,h^n)_{n \geq 0}\) satisfy the Sinkhorn update
\cref{eq:h-update,eq:varphi-update,eq:psi-update}, then their affine gauge
transforms
\[
\varphi^{n,\ell}=\varphi^n+\ell|_{\mathcal X},\qquad
\psi^{n,\ell}=\psi^n-\ell,\qquad
h^{n,\ell}=h^n+b
\]
satisfy the same update equations for $n \geq 0$.
\end{proposition}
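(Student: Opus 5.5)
We verify the claim by direct substitution into each of the three update equations, using the identity \cref{eq:affine-gauge-invariance} and the elementary decomposition
\[
\ell(y)=\ell(x)+b\cdot(y-x),\qquad x\in\mathcal X,\ y\in\mathcal Y.
\]
The gauged data are \(\varphi^{n,\ell},\psi^{n,\ell},h^{n,\ell}\). We must show that if \((\varphi^n,\psi^n,h^n)\) satisfies \cref{eq:h-update,eq:varphi-update,eq:psi-update}, then so does the transformed triple, with the same index pattern. We also need the initialization to be read consistently. The gauged initialization is \(\varphi^{0,\ell}=\ell|_{\mathcal X}\), \(\psi^{0,\ell}=-\ell\), \(h^{0,\ell}=b\). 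The statement only concerns the update equations for \(n\ge0\), so no claim about \(\varphi^0=\psi^0=h^0=0\) is needed.

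\textbf{Step 1.} Substituting the gauged quantities into the exponent of \cref{eq:h-update} gives
\[
-\varphi^{n,\ell}(x)-\psi^{n,\ell}(y)-h^{n+1,\ell}(x)\cdot(y-x)-c(x,y).
\]
By \cref{eq:affine-gauge-invariance} with \(k=n\), \(m=n\), and the \(h\)-index shifted, this equals the original exponent. Explicitly, \(-\ell(x)+\ell(y)-b\cdot(y-x)=0\). Hence the integrand in \cref{eq:h-update} is unchanged, and \(h^{n+1,\ell}=h^{n+1}+b\) solves the gauged equation. By the uniqueness in \cref{prop:compact-well-posedness}, it is in fact the solution chosen by the gauged iteration.

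\textbf{Step 2.} In \cref{eq:varphi-update} the exponent is
\[
-\psi^n(y)-h^{n+1}(x)\cdot(y-x)-c(x,y).
\]
Replacing \(\psi^n\) by \(\psi^n-\ell\) and \(h^{n+1}\) by \(h^{n+1}+b\) adds
\[
\ell(y)-b\cdot(y-x)=\ell(x),
\]
which is independent of \(y\). Pulling \(e^{\ell(x)}\) out of the integral shows that the gauged right-hand side equals \(\varphi^{n+1}(x)+\ell(x)=\varphi^{n+1,\ell}(x)\).

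\textbf{Step 3.} In \cref{eq:psi-update}, replacing \(\varphi^{n+1}\) by \(\varphi^{n+1}+\ell\) and \(h^{n+1}\) by \(h^{n+1}+b\) adds
\[
-\ell(x)-b\cdot(y-x)=-\ell(y)
\]
to the exponent. This quantity is independent of \(x\), so the log-integral becomes \(\psi^{n+1}(y)-\ell(y)=\psi^{n+1,\ell}(y)\).

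There is no real obstacle; the only point requiring care is Step 1. There we must invoke the uniqueness of \(h\) from \cref{prop:compact-well-posedness}, so that the gauged iteration selects \(h^{n+1}+b\) rather than some other root. The reduced-coordinate convention of \cref{rem:compact-support-consequences} ensures that \(\ell\) is well defined on \(\mathcal X\subset\mathcal Y\).
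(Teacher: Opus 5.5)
Your proof is correct and follows the same route as the paper, which simply observes via \cref{eq:affine-gauge-invariance} that the exponents in the three updates are unchanged. Your explicit computation is a bit more careful than the paper's one-line citation. In the $\varphi$- and $\psi$-updates only two potentials appear, so the exponent shifts by $\ell(x)$ or $-\ell(y)$, and this shift is absorbed by the logarithm; your appeal to uniqueness from \cref{prop:compact-well-posedness} also correctly identifies $h^{n+1}+b$ as the root the gauged iteration selects.
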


\begin{proof}
By \cref{eq:affine-gauge-invariance} the exponential terms in the three Sinkhorn updates
\cref{eq:h-update,eq:varphi-update,eq:psi-update} are unchanged.
Therefore the gauge-transformed potentials satisfy the same update equations.
\end{proof}

\subsection{Uniform bounds and marginal convergence}

We next introduce a functional defined entirely from the iterates.
Its decrease is the sum of the two half-step entropies from
\cref{prop:half-step-identities}. It allows us
to study convergence before a finite-entropy feasible coupling has
been constructed.

\begin{proposition}[Monotonicity of the gauge-invariant functional]
\label{prop:reduced-lyapunov}
Under \cref{ass:main} and \cref{ass:two-step}, define
\[
\Phi_n:=\int_\mathcal X\varphi^{n+1}\,\mathrm d\mu
 +\int_\mathcal Y\psi^n\,\mathrm d\nu.
\]
Then \(\Phi_n\) is invariant under affine gauges and
\begin{align}
\Phi_n-\Phi_{n+1}
&=H(\pi^{n+2,n+1}\mid\pi^{n+1,n+1})\notag\\
&\quad+H(\pi^{n+1,n+1}\mid\pi^{n+1,n})\geq0.
\label{eq:Phi-decrement}
\end{align}
Moreover, \(\Phi_n\leq\Phi_0\leq0\) for every \(n\geq0\).
\end{proposition}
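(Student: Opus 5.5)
Three things have to be shown: gauge invariance of \(\Phi_n\), the decrement identity \eqref{eq:Phi-decrement}, and the bound \(\Phi_0\leq0\).

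\emph{Gauge invariance.} Under \(\ell(y)=a+b\cdot y\), the functional \(\Phi_n\) changes by
\[
\int_{\mathcal X}\ell\,\mathrm d\mu-\int_{\mathcal Y}\ell\,\mathrm d\nu .
\]
This vanishes because \(\mu\) and \(\nu\) have the same barycenter; by \cref{rem:strict-convex-order}, convex order holds under \cref{ass:two-step}.

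\emph{Decrement identity.} By \cref{prop:compact-well-posedness} the iteration is entropy-admissible, and all potentials are bounded and continuous. I will compute the two half-step entropies directly from the logarithmic ratios \eqref{eq:first-halfstep-ratio} and \eqref{eq:second-halfstep-ratio}, in the spirit of the proof of \cref{prop:half-step-identities}.

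For the terminal-marginal half-step, integrate \eqref{eq:second-halfstep-ratio} (at level \(n\)) against \(\pi^{n+1,n+1}\), whose second marginal is \(\nu\). This gives
\[
H(\pi^{n+1,n+1}\mid\pi^{n+1,n})=\int(\psi^n-\psi^{n+1})\,\mathrm d\nu .
\]

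For the martingale half-step at level \(n+1\), integrate
\[
\log\frac{\mathrm d\pi^{n+2,n+1}}{\mathrm d\pi^{n+1,n+1}}
=\varphi^{n+1}(x)-\varphi^{n+2}(x)+(h^{n+1}(x)-h^{n+2}(x))\cdot(y-x)
\]
against \(\pi^{n+2,n+1}\in\mathcal M(\mu,\cdot)\). The martingale term vanishes because \(h^{n+1}-h^{n+2}\) is bounded. Its first marginal is \(\mu\), so this yields
\[
H(\pi^{n+2,n+1}\mid\pi^{n+1,n+1})=\int(\varphi^{n+1}-\varphi^{n+2})\,\mathrm d\mu .
\]
Adding the two identities gives exactly \(\Phi_n-\Phi_{n+1}\), which is nonnegative. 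All integrands are bounded on the compact supports, so these manipulations are justified.

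\emph{The bound \(\Phi_0\leq0\).} This is the step needing a small trick. We have \(\psi^0=0\), so \(\Phi_0=\int\varphi^1\,\mathrm d\mu\), where
\[
\varphi^1(x)=\min_p F_0(x,p)\leq F_0(x,0)=\log\int e^{-c(x,y)}\,\nu(\mathrm dy).
\]
By Jensen's inequality (concavity of \(\log\)),
\[
\int\varphi^1\,\mathrm d\mu\leq\log\iint e^{-c}\,\mathrm d\mu\,\mathrm d\nu=\log 1=0,
\]
using that \(R\) is a probability measure. Monotonicity then gives \(\Phi_n\leq\Phi_0\leq0\).

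The only subtle point is justifying the vanishing of the martingale term. This uses that \(\pi^{n+2,n+1}\) satisfies the martingale constraint, after the harmless replacement of \(\varphi^{n+1}\) by \(\varphi^{n+2}\) in \eqref{eq:h-update} discussed after \cref{def:entropy-admissible-iteration}, together with boundedness of \(h\).
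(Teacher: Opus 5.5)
Your proposal is correct and follows the paper's proof essentially step for step. You obtain gauge invariance from equality of barycenters, compute the two half-step entropies as $\int(\varphi^{n+1}-\varphi^{n+2})\,\mathrm d\mu$ and $\int(\psi^n-\psi^{n+1})\,\mathrm d\nu$ from the log-density ratios and the one-sided constraints, and get $\Phi_0\leq 0$ from $\varphi^1(x)\leq F_0(x,0)$, Jensen's inequality, and the fact that $R$ is a probability measure. (Your appeal to the strict-convex-order remark is unnecessary, since $\mu\preceq_{\mathrm{cx}}\nu$ is already part of the first standing assumption, but it does no harm.)
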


\begin{proof}
The equality of barycenters in \cref{ass:main} gives
\(\int\ell\,\mathrm d\mu=\int\ell\,\mathrm d\nu\) for every affine
\(\ell\), which proves gauge invariance.
All finite-step potentials are bounded by
\cref{prop:compact-well-posedness}. Computing the logarithmic density
ratios and using the one-sided constraints gives
\begin{align*}
H(\pi^{n+2,n+1}\mid\pi^{n+1,n+1})
&=\int_\mathcal X(\varphi^{n+1}-\varphi^{n+2})\,\mathrm d\mu,\\
H(\pi^{n+1,n+1}\mid\pi^{n+1,n})
&=\int_\mathcal Y(\psi^n-\psi^{n+1})\,\mathrm d\nu.
\end{align*}
Adding these identities proves \cref{eq:Phi-decrement}.
Since \(\psi^0=0\), the pointwise minimizing property established in
\cref{prop:compact-well-posedness}, with competitor \(p=0\), gives
\[
\varphi^1(x)
=
F_0(x,h^1(x))
=
\min_{p\in\mathbb{R}^d} F_0(x,p)
\le
F_0(x,0)
=
\log \int_Y e^{-c(x,y)}\,\nu(dy).
\]
By Jensen's inequality, we have
\[
\Phi_0=\int_\mathcal X\varphi^1\,\mathrm d\mu
\leq\log\int_{\mathcal X\times\mathcal Y}e^{-c(x,y)}
 \,\mu(\mathrm dx)\nu(\mathrm dy).
\]
Since \(R\) is a probability measure and \(\frac{dR}{d(\mu\otimes\nu)}(x,y)=e^{-c(x,y)}\), we have
\[
\int_{X\times Y}e^{-c(x,y)}\,\mu(dx)\nu(dy)=R(X\times Y)=1.
\]
Hence \(\Phi_0\le 0\). By monotonicity, \(\Phi_n\le\Phi_0\le0\) for every \(n\ge0\).
\end{proof}

We now proceed to the proof of \cref{thm:uniform-affine-gauge-bound}. We show that the potential $\psi^n$ differs by a uniformly bounded function from a
convex log-partition function. If the potentials $\psi^n$ were
unbounded modulo affine functions, rescaling them would therefore
give a non-affine convex limit. Strict convex order condition would then
contradict the upper bound on \(\Phi_n\).

\begin{proof}[Proof of \cref{thm:uniform-affine-gauge-bound}]
Set
\[
T_n:=\inf_{\ell\in\mathcal A(\mathcal Y)}
\|\psi^n-\ell\|_{C(\mathcal Y)}.
\]
Suppose that \((T_n)_{n\geq0}\) is unbounded. Choose record times
\(n_j\geq1\) such that
\[
t_j:=T_{n_j}=\max_{0\leq m\leq n_j}T_m\longrightarrow\infty,
\qquad t_j\geq1.
\]
Since \(T_{n_j-1}\leq t_j\), we may choose
\(\ell_j\in\mathcal A(\mathcal Y)\) with
\[
\|\psi^{n_j-1,\ell_j}\|_{C(\mathcal Y)}\leq2t_j.
\]
We keep this gauge for both of the next two updates.

\emph{Compactness after rescaling.}
By \cref{prop:compact-well-posedness,prop:gauge-preserves-sinkhorn}
and the normalization updates,
\[
\|h^{n_j,\ell_j}\|_{C(\mathcal X)}
+\|\varphi^{n_j,\ell_j}\|_{C(\mathcal X)}
+\|\psi^{n_j,\ell_j}\|_{C(\mathcal Y)}
\leq C(1+t_j),
\]
where \(C\) is independent of \(j\). Comparing the terminal update at
two points also gives
\[
\operatorname{Lip}(\psi^{n_j,\ell_j})
\leq\|h^{n_j,\ell_j}\|_{C(\mathcal X)}
 +\operatorname{Lip}_y(c)\leq C(1+t_j).
\]
Define \(u_j:=\psi^{n_j,\ell_j}/t_j\). Since \(t_j\geq1\),
\[
\|u_j\|_{C(\mathcal Y)}+\operatorname{Lip}(u_j)\leq C,
\qquad
\inf_{\ell\in\mathcal A(\mathcal Y)}
\|u_j-\ell\|_{C(\mathcal Y)}=1.
\]
The last equality holds for any affine gauge: subtracting \(\ell_j\)
does not change the distance to \(\mathcal A(\mathcal Y)\), and
rescaling divides that distance by \(t_j=T_{n_j}\).
Arzel\`a--Ascoli theorem gives a subsequence converging uniformly on
\(\mathcal Y\) to a continuous function \(u\).

\emph{Convexity of the limit.}
Put \(C_c:=\|c\|_{C(\mathcal X\times\mathcal Y)}\) and define
\[
G_j(y):=\log\int_\mathcal X
\exp\bigl(-\varphi^{n_j,\ell_j}(x)
-h^{n_j,\ell_j}(x)\cdot(y-x)\bigr)\,\mu(\mathrm dx).
\]
This is a finite convex function of \(y\), by H\"older's inequality.
The $\psi$-update \cref{eq:psi-update} gives
\[
\|\psi^{n_j,\ell_j}-G_j\|_{C(\mathcal Y)}\leq C_c,
\qquad
\|u_j-G_j/t_j\|_{C(\mathcal Y)}\leq C_c/t_j\longrightarrow0.
\]
Hence \(u\) is convex. Distance to \(\mathcal A(\mathcal Y)\) is
\(1\)-Lipschitz in the supremum norm, so
\[
\inf_{\ell\in\mathcal A(\mathcal Y)}
\|u-\ell\|_{C(\mathcal Y)}=1.
\]
In particular, \(u\) is not affine.

\emph{Use of strict convex order.}
Apply the next update in the same gauge \(\ell_j\). The bound on
\(u_j\) and \cref{eq:linear-multiplier-bound} imply
\[
\|h^{n_j+1,\ell_j}\|_{C(\mathcal X)}\leq C t_j.
\]
Fix \(0<\varepsilon<\rho\). For  \(y \in B(x,\varepsilon)\subset\mathcal Y\), we have
\[
u_j(y)\leq u_j(x)+\operatorname{Lip}(u_j)\varepsilon,
\qquad
|h^{n_j+1,\ell_j}(x)\cdot(y-x)|\leq C t_j\varepsilon.
\]
Restricting the $\varphi$-update \cref{eq:varphi-update} to this ball gives
\[
\varphi^{n_j+1,\ell_j}(x)
\geq-t_ju_j(x)-Ct_j\varepsilon-C_c+\log m_\nu(\varepsilon).
\]
Here \(C\) is independent of both \(j\) and \(\varepsilon\).
After division by \(t_j\) and integration against \(\mu\), let
\(j\to\infty\) for fixed \(\varepsilon\), and then let
\(\varepsilon \to 0\). Since \(m_\nu(\varepsilon)>0\), this yields
\[
\liminf_{j\to\infty}\frac1{t_j}
\int_\mathcal X\varphi^{n_j+1,\ell_j}\,\mathrm d\mu
\geq-\int_\mathcal X u\,\mathrm d\mu.
\]
Gauge invariance and uniform convergence of \(u_j\) now imply
\[
\liminf_{j\to\infty}\frac{\Phi_{n_j}}{t_j}
\geq\int_\mathcal Y u\,\mathrm d\nu
 -\int_\mathcal X u\,\mathrm d\mu>0,
\]
where the strict inequality follows from
\cref{eq:strict-convex-order}. This contradicts
\(\Phi_{n_j}\leq\Phi_0\leq0\) from
\cref{prop:reduced-lyapunov}. Thus \((T_n)_{n\geq0}\) is bounded.
Taking \(C_\psi:=1+\sup_{n\geq0}T_n\) proves the claimed result.
\end{proof}

We now choose one bounded representative of $\psi^n$ at each step of the iteration. Later
estimates use the same chosen gauge for consecutive updates, so the
iteration index and the gauge index must be kept separate.

\begin{definition}[Good gauges]
\label{def:good-gauges}
Under \cref{ass:main} and \cref{ass:two-step}, fix
\(C_\psi\) as in that theorem. For every \(n\geq0\), choose an affine
function \(\ell_n\in\mathcal A(\mathcal Y)\) such that
\[
\lVert\psi^n-\ell_n\rVert_{C(\mathcal Y)}\leq C_\psi.
\]
We call such a choice a \emph{good gauge} at level \(n\). We take
\(\ell_0=0\), since \(\psi^0=0\), and fix one choice for each \(n\geq1\).
For these fixed gauges, set
\[
(\varphi^{k,[n]},\psi^{k,[n]},h^{k,[n]})
:=(\varphi^{k,\ell_n},\psi^{k,\ell_n},h^{k,\ell_n}),
\qquad k\geq0.
\]
The label \([n]\) denotes the fixed gauge \(\ell_n\), independently of
the iteration level \(k\). We use the explicit gauge superscript when
it is necessary to distinguish gauges.
\end{definition}

The uniform gauge bound also gives a lower bound on \(\Phi_n\).
We can therefore sum its decrement identity \eqref{eq:Phi-decrement} and deduce convergence
of the marginals.

\begin{proof}[Proof of \cref{thm:main-convergence}]
Fix \(n\geq0\) and use the good gauge \(\ell_n\) from
\cref{def:good-gauges}.
By \cref{eq:linear-multiplier-bound,prop:gauge-preserves-sinkhorn},
\[
\|\psi^{n,[n]}\|_{C(\mathcal Y)}\leq C_\psi,
\qquad
\|h^{n+1,[n]}\|_{C(\mathcal X)}\leq C_h(1+C_\psi).
\]
Then the $\varphi$-update \cref{eq:varphi-update} gives
\[
\|\varphi^{n+1,[n]}\|_{C(\mathcal X)}
\leq C_\psi+\operatorname{diam}(\mathcal Y) C_h(1+C_\psi)+\|c\|_{C(\mathcal X\times\mathcal Y)}.
\]
Gauge invariance implies \(\inf_n\Phi_n> -\infty\).
Telescoping \cref{eq:Phi-decrement} now shows that
\[
\sum_{n=0}^\infty
\bigl[H(\pi^{n+2,n+1}\mid\pi^{n+1,n+1})
 +H(\pi^{n+1,n+1}\mid\pi^{n+1,n})\bigr]<\infty.
\]
Both summands tend to zero. Applying
\cref{eq:marginal-data-processing}, with an index shift, proves all
four limits in \cref{eq:main-convergence}.
\end{proof}

\subsection{Regularity and existence of the optimizer}

A bound on the potential $\psi$ gives uniform regularity of $\mathcal{S}\psi$, with the Lipschitz constant depending on the supremum norm of $\psi$, but not on its
Lipschitz seminorm. We use this fact to extract a limit while retaining
the exponential form of the density.

Under \cref{ass:two-step} by \cref{lem:compact-support-nondegeneracy}, with
\(\bar y_\nu:=\int_\mathcal Y y\,\nu(\mathrm dy)\), we have that
\begin{equation}
\sigma_\nu:=\lambda_{\min}\!\left(
\int_\mathcal Y(y-\bar y_\nu)(y-\bar y_\nu)^\top\,\nu(\mathrm dy)
\right)>0.
\label{eq:reference-covariance}
\end{equation}
This covariance bound controls the dependence of the potential $h$ on
\(x\). The other two potentials can be estimated directly from the
update formulas.

\begin{proposition}[Bounds for one Sinkhorn update]
\label{prop:multiplier-lipschitz}
Let \cref{ass:main} and \cref{ass:two-step} hold. Then for every \(0\leq M<\infty\),
there is a constant \(C_M<\infty\), depending only on \(M\), the
compact supports, the cost bounds, \(m_\nu(\rho/4)\), and
\(\sigma_\nu\), such that for every \(\psi\in C(\mathcal Y)\) with
\(\|\psi\|_{C(\mathcal Y)}\leq M\) we have
\[
\begin{aligned}
&\|h_\psi\|_{C(\mathcal X)}+\operatorname{Lip}(h_\psi)
 +\|\varphi_\psi\|_{C(\mathcal X)}+\operatorname{Lip}(\varphi_\psi)\\
&\qquad+\|\mathcal S\psi\|_{C(\mathcal Y)}
 +\operatorname{Lip}(\mathcal S\psi)\leq C_M.
\end{aligned}
\]
In particular,
\begin{equation}
\begin{aligned}
\operatorname{Lip}(\varphi_\psi)
&\leq\|h_\psi\|_{C(\mathcal X)}+\operatorname{Lip}_x(c),\\
\operatorname{Lip}(\mathcal S\psi)
&\leq\|h_\psi\|_{C(\mathcal X)}+\operatorname{Lip}_y(c).
\end{aligned}
\label{eq:update-lipschitz-bounds}
\end{equation}
\end{proposition}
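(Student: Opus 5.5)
The plan is to bound the three objects in the order they are produced: first $h_\psi$ (sup norm, then Lipschitz), then $\varphi_\psi$, then $\mathcal S\psi$. The sup-norm bound for $h_\psi$ is already contained in \cref{eq:linear-multiplier-bound}: $\|h_\psi\|_{C(\mathcal X)}\le C_h(1+M)$. With this in hand, $\varphi_\psi(x)=F_\psi(x,h_\psi(x))$ is a log-integral of $e^{-\psi-h_\psi\cdot(y-x)-c}$ against the probability measure $\nu$. Its modulus is therefore bounded by $M+\operatorname{diam}(\mathcal Y)\|h_\psi\|_\infty+\|c\|_\infty$. In the same way, $|\mathcal S\psi|\le\|\varphi_\psi\|_\infty+\operatorname{diam}(\mathcal Y)\|h_\psi\|_\infty+\|c\|_\infty$, using $|y-x|\le\operatorname{diam}(\mathcal Y)$ since $\mathcal X\subset\mathcal Y$.

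For the Lipschitz bounds \cref{eq:update-lipschitz-bounds}, I would use the envelope / minimum structure rather than differentiating $h_\psi$. We have $\varphi_\psi(x)=\min_p F_\psi(x,p)$, and for fixed $p$ the map $x\mapsto F_\psi(x,p)$ is a log-integral whose exponent $-p\cdot(y-x)-c(x,y)$ is Lipschitz in $x$ with constant $|p|+\operatorname{Lip}_x(c)$. For $x,x'$, take $p=h_\psi(x')$ to get
\[
\varphi_\psi(x)\le F_\psi(x,h_\psi(x'))\le\varphi_\psi(x')+(|h_\psi(x')|+\operatorname{Lip}_x(c))|x-x'|.
\]
Symmetrizing gives $\operatorname{Lip}(\varphi_\psi)\le\|h_\psi\|_\infty+\operatorname{Lip}_x(c)$. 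For $\mathcal S\psi$, the exponent $-h_\psi(x)\cdot(y-x)-c(x,y)$ is Lipschitz in $y$ with constant $\|h_\psi\|_\infty+\operatorname{Lip}_y(c)$, uniformly in $x$. A log-integral of functions with a common Lipschitz modulus inherits it, which gives the second inequality.

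The main obstacle is $\operatorname{Lip}(h_\psi)$, and this is where $\sigma_\nu$ enters. I would use the implicit-function structure: $h_\psi(x)$ solves $G(x,p):=\int(y-x)e^{-\psi(y)-p\cdot(y-x)-c(x,y)}\nu(\mathrm dy)=0$. Equivalently, $\nabla_pF_\psi(x,p)=0$, whose $p$-Jacobian is $\operatorname{Cov}_{Q_{x,p}}(Y)$. To avoid differentiability of $c$ in $x$, I would argue by strong convexity. Let $p=h_\psi(x)$ and $p'=h_\psi(x')$. A lower bound $\lambda_{\min}(\operatorname{Cov}_{Q_{x,q}})\ge\kappa_M>0$, uniform over $|q|\le C_h(1+M)$ and $x\in\mathcal X$, would give
\[
\kappa_M|p-p'|^2\le(\nabla_pF_\psi(x,p')-\nabla_pF_\psi(x,p))\cdot(p'-p)=\nabla_pF_\psi(x,p')\cdot(p'-p).
\]
Here $\nabla_pF_\psi(x,p')=\nabla_pF_\psi(x,p')-\nabla_pF_\psi(x',p')$, and $\nabla_pF_\psi(x,p')=x-\mathbb E_{Q_{x,p'}}Y$. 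Changing $x$ to $x'$ moves $x$ by $|x-x'|$ and perturbs the log-density of $Q$ by at most $(|p'|+\operatorname{Lip}_x(c))|x-x'|$, hence $2(\ldots)|x-x'|$ after normalization. The mean then moves by at most $\operatorname{diam}(\mathcal Y)\,(e^{2(\ldots)|x-x'|}-1)$, which is $O(|x-x'|)$ for small $|x-x'|$. This yields $|p-p'|\le C_M|x-x'|/\kappa_M$ locally, and globally by compactness together with the sup bound.

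The covariance lower bound $\kappa_M$ comes from the density of $Q_{x,q}$ with respect to $\nu$ being bounded above and below by $e^{\pm K}$, with $K=2(M+\|c\|_\infty+\operatorname{diam}(\mathcal Y)C_h(1+M))$. For a unit vector $e$,
\[
\operatorname{Var}_Q(e\cdot Y)=\min_a\mathbb E_Q(e\cdot Y-a)^2\ge e^{-K}\min_a\mathbb E_\nu(e\cdot Y-a)^2\ge e^{-K}\sigma_\nu,
\]
by \cref{eq:reference-covariance}. So $\kappa_M=e^{-K}\sigma_\nu$, and all constants depend only on the listed data.
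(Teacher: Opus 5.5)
Your proposal is correct and follows essentially the same route as the paper. The paper also reads the sup-norm bounds off the update formulas, gets the Lipschitz bounds for $\varphi_\psi$ and $\mathcal S\psi$ from the minimum structure and a common Lipschitz modulus, and bounds $\operatorname{Lip}(h_\psi)$ by strong monotonicity of $\nabla_pF_\psi$, using the covariance floor $e^{-K}\sigma_\nu$ on $\overline B(0,C_h(1+M))$.

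There are two cosmetic differences. The paper notes that the factor $e^{p\cdot x}$ cancels in the normalization of $Q_{x,p}$, so your extra $|p'|$ term in the perturbation is harmless but unnecessary. The paper also uses $e^a-1\le ae^a$ to obtain a global Lipschitz constant directly, where you combine a local bound with the sup bound.
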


\begin{proof}
Fix \(\psi\in C(\mathcal Y)\) with \(\|\psi\|_{C(\mathcal Y)}\leq M\),
and put \(D:=\operatorname{diam}(\mathcal Y)\) and
\(C_c:=\|c\|_{C(\mathcal X\times\mathcal Y)}\). Use the functions
\(F_\psi\) and \(Q_{x,p}\) from the proof of
\cref{prop:compact-well-posedness}. That proposition gives
\[
\|h_\psi\|_{C(\mathcal X)}\leq C_h(1+M)=:B_M.
\]
For \(x\in\mathcal X\) and \(|p|\leq B_M\),
\[
\frac{\mathrm dQ_{x,p}}{\mathrm d\nu}
\geq e^{-2M-2B_MD-2C_c}=:a_M>0.
\]
Consequently, for every unit vector \(e\),
\begin{align*}
e^\top\operatorname{Cov}_{Q_{x,p}}(Y)e
&=\inf_{t\in\mathbb R}\int_\mathcal Y(e\cdot y-t)^2\,Q_{x,p}(\mathrm dy)\\
&\geq a_M\inf_{t\in\mathbb R}\int_\mathcal Y(e\cdot y-t)^2\,\nu(\mathrm dy)
\geq a_M\sigma_\nu=:\lambda_M>0.
\end{align*}
Thus \(\nabla_{pp}^2F_\psi(x,p)\succeq\lambda_M I_d\) for \(x\in\mathcal X\) and \(|p|\leq B_M\).

The factor \(e^{p\cdot x}\) cancels in the normalization of
\(Q_{x,p}\). Comparing the remaining cost terms gives
\[
e^{-2\operatorname{Lip}_x(c)|x-x'|}
\leq\frac{\mathrm dQ_{x,p}}{\mathrm dQ_{x',p}}
\leq e^{2\operatorname{Lip}_x(c)|x-x'|}.
\]
Subtracting a fixed \(y_0\in\mathcal Y\) when comparing the means,
we obtain
\[
\left|\int y\,Q_{x,p}(\mathrm dy)-\int y\,Q_{x',p}(\mathrm dy)\right|
\leq D\bigl(e^{2\operatorname{Lip}_x(c)|x-x'|}-1\bigr).
\]
Using \(e^a-1\leq ae^a\) for \(a\geq0\), it follows that
\[
|\nabla_pF_\psi(x,p)-\nabla_pF_\psi(x',p)|
\leq K_x|x-x'|,
\quad
K_x:=1+2D\operatorname{Lip}_x(c)
 e^{2\operatorname{Lip}_x(c)\operatorname{diam}(\mathcal X)}.
\]
Set \(p=h_\psi(x)\) and \(p'=h_\psi(x')\). Both belong to
\(\overline B(0,B_M)\). The covariance bound and the two first-order
conditions give
\begin{align*}
\lambda_M|p-p'|^2
&\leq\bigl(\nabla_pF_\psi(x,p)-\nabla_pF_\psi(x,p')\bigr)\cdot(p-p')\\
&=\bigl(\nabla_pF_\psi(x',p')-\nabla_pF_\psi(x,p')\bigr)\cdot(p-p')\\
&\leq K_x|x-x'||p-p'|.
\end{align*}
Hence \(\operatorname{Lip}(h_\psi)\leq K_x/\lambda_M\).

The $\varphi$ and $\psi$-update \cref{eq:varphi-update,eq:psi-update} give
\[
\|\varphi_\psi\|_{C(\mathcal X)}\leq M+DB_M+C_c,
\qquad
\|\mathcal S\psi\|_{C(\mathcal Y)}
\leq\|\varphi_\psi\|_{C(\mathcal X)}+DB_M+C_c.
\]
To estimate \(\varphi_\psi\), use the minimizing property
\(\varphi_\psi(x)=\min_pF_\psi(x,p)\). For fixed \(p\),
\[
|F_\psi(x,p)-F_\psi(x',p)|
\leq\bigl(|p|+\operatorname{Lip}_x(c)\bigr)|x-x'|.
\]
Testing the minimum at \(x\) with \(p=h_\psi(x')\) gives
\[
\varphi_\psi(x)-\varphi_\psi(x')
\leq\bigl(\|h_\psi\|_{C(\mathcal X)}+\operatorname{Lip}_x(c)\bigr)
|x-x'|.
\]
Interchanging \(x\) and \(x'\) proves the first bound in
\cref{eq:update-lipschitz-bounds}. Comparing $\mathcal{S} \psi$ at
\(y,y'\) gives the second bound. Combining these estimates proves
the proposition.
\end{proof}

We will prove the existence of an optimiser for the EMOT problem by using a compactness argument and the exponential representation of \(\pi^{n+2,n+1}\). In the fixed gauge
\(\ell_n\), we use the first Sinkhorn update to obtain the regularity of
\(\psi^{n+1,[n]}\), and the second update to get the regularity of
\(\varphi^{n+2,[n]}\) and \(h^{n+2,[n]}\). For that reason in the following corollary we
keep all these functions in the same gauge.

\begin{corollary}[Uniform two-step estimates in a fixed gauge]
\label{prop:uniform-two-step}
Under \cref{ass:main,ass:two-step}, there is a constant \(C<\infty\),
independent of \(n\), such that, for every \(n\geq0\), for a good gauge
\(\ell_n\) of \cref{def:good-gauges} we have
\[
\sum_{k=n+1}^{n+2}
\left[
\begin{aligned}
&\lVert h^{k,[n]}\rVert_{C(\mathcal{X})}
+
\operatorname{Lip}\bigl(h^{k,[n]}\bigr)
+
\lVert\varphi^{k,[n]}\rVert_{C(\mathcal{X})}
\\
&\qquad
+
\operatorname{Lip}\bigl(\varphi^{k,[n]}\bigr)
+
\lVert\psi^{k,[n]}\rVert_{C(\mathcal{Y})}
+
\operatorname{Lip}\bigl(\psi^{k,[n]}\bigr)
\end{aligned}
\right]
\leq C.
\]
\end{corollary}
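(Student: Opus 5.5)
The plan is to apply \cref{prop:multiplier-lipschitz} twice in the fixed gauge \(\ell_n\), using \cref{prop:gauge-preserves-sinkhorn} to identify the gauged iterates with Sinkhorn operator outputs. By \cref{def:good-gauges}, \(\|\psi^{n,[n]}\|_{C(\mathcal Y)}\leq C_\psi\). The gauge-transformed sequence satisfies the same update equations, and the solution of the martingale equation is unique by \cref{prop:compact-well-posedness}. Hence
\[
h^{n+1,[n]}=h_{\psi^{n,[n]}},\qquad
\varphi^{n+1,[n]}=\varphi_{\psi^{n,[n]}},\qquad
\psi^{n+1,[n]}=\mathcal S\psi^{n,[n]}.
\]
Applying \cref{prop:multiplier-lipschitz} with \(M=C_\psi\) bounds all six quantities at level \(k=n+1\) by \(C_{C_\psi}\). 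This constant is independent of \(n\).

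For the level \(k=n+2\), we use that \(\psi^{n+1,[n]}=\mathcal S\psi^{n,[n]}\) now satisfies \(\|\psi^{n+1,[n]}\|_{C(\mathcal Y)}\leq C_{C_\psi}=:M_1\). The same gauge is still admissible for the next update, again by \cref{prop:gauge-preserves-sinkhorn}. Therefore \(h^{n+2,[n]}=h_{\psi^{n+1,[n]}}\), \(\varphi^{n+2,[n]}=\varphi_{\psi^{n+1,[n]}}\) and \(\psi^{n+2,[n]}=\mathcal S\psi^{n+1,[n]}\). A second application of \cref{prop:multiplier-lipschitz} with \(M=M_1\) bounds every term at level \(n+2\) by \(C_{M_1}\). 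Summing the two levels gives the claim with \(C:=6(C_{C_\psi}+C_{M_1})\), or simply \(C_{C_\psi}+C_{M_1}\) since each proposition bound already controls the sum of six terms.

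The main point requiring care is the identification of gauged iterates with operator outputs. The key observation is that \(\varphi^{n+1}\) and \(h^{n+1}\) depend only on \(\psi^n\); the Step~2 formula in \cref{eq:varphi-update} does not involve \(\varphi^n\). The gauge \(b\)-shift of \(h\) exactly matches the uniqueness statement for \(h_\psi\), so no ambiguity arises. A second point is that the constant \(C_M\) of \cref{prop:multiplier-lipschitz} depends only on \(M\) and fixed data, not on the Lipschitz seminorm of \(\psi\). This is what lets the second step run without losing uniformity: only the sup-norm bound \(M_1\) is propagated.
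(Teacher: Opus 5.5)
Your proposal is correct and matches the paper's proof: you apply Proposition~\ref{prop:multiplier-lipschitz} twice in the fixed gauge $\ell_n$, first with $M=C_\psi$ and then with $M=C_{C_\psi}$, justified by Proposition~\ref{prop:gauge-preserves-sinkhorn}, which gives $C=C_{C_\psi}+C_{C_{C_\psi}}$. You also check that the gauged iterates coincide with the operator outputs, using uniqueness of $h_\psi$ and the fact that Step~2 does not involve $\varphi^n$; the paper leaves this implicit, and your check fills it in correctly.
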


\begin{proof}
Fix \(n\geq0\) and keep \(\ell_n\) fixed. By
\cref{def:good-gauges}, \(\|\psi^{n,[n]}\|_{C(\mathcal Y)}\leq C_\psi\).
Using \cref{prop:gauge-preserves-sinkhorn}, apply
\cref{prop:multiplier-lipschitz} to this input. The sum of the six
norms at level \(n+1\) is at most \(C_1:=C_{C_\psi}\).
In particular, \(\|\psi^{n+1,[n]}\|_{C(\mathcal Y)}\leq C_1\).
A second application, now to \(\psi^{n+1,[n]}\), bounds the sum at
level \(n+2\) by \(C_2:=C_{C_1}\). The result follows with
\(C=C_1+C_2\), independently of \(n\).
\end{proof}

We can now extract convergent potentials. The marginal convergence result
identifies the terminal marginal of the limit, and the optimality
criterion from \cref{thm:general-pythagorean} finishes the proof.

\begin{proof}[Proof of \cref{thm:optimizer_representation}]
Fix good gauges \(\ell_n\) from \cref{def:good-gauges}. By
\cref{def:affine-gauges}, the intermediate densities can be written as
\[
\frac{\mathrm{d}\pi^{n+2,n+1}}{\mathrm{d}(\mu\otimes\nu)}(x,y)
=
\exp\!\left(
-\varphi^{n+2,[n]}(x)
-\psi^{n+1,[n]}(y)
-h^{n+2,[n]}(x)\cdot(y-x)
-c(x,y)
\right).
\]

By \cref{prop:uniform-two-step}, the families
\[
\bigl(\varphi^{n+2,[n]}\bigr)_{n\geq 0},
\qquad
\bigl(\psi^{n+1,[n]}\bigr)_{n\geq 0},
\qquad
\bigl(h^{n+2,[n]}\bigr)_{n\geq 0}
\]
are uniformly bounded and equi-Lipschitz on \(\mathcal{X}\), \(\mathcal{Y}\), and \(\mathcal{X}\),
respectively. Hence, by the Arzel\`a--Ascoli theorem, there exist a
subsequence \((n_j)_{j\geq 1}\) and functions
\[
\varphi^\star\in\operatorname{Lip}(\mathcal{X}),\qquad
\psi^\star\in\operatorname{Lip}(\mathcal{Y}),\qquad
h^\star\in\operatorname{Lip}(\mathcal{X};\mathbb{R}^d),
\]
such that
\[
\varphi^{n_j+2,[n_j]}\longrightarrow\varphi^\star
\quad\text{uniformly on \(\mathcal{X}\)},
\]
\[
\psi^{n_j+1,[n_j]}\longrightarrow\psi^\star
\quad\text{uniformly on \(\mathcal{Y}\)},
\]
and
\[
h^{n_j+2,[n_j]}\longrightarrow h^\star
\quad\text{uniformly on \(\mathcal{X}\)}.
\]

Define the measure \(\pi^\star\) by
\[
\frac{\mathrm d\pi^\star}{\mathrm d(\mu\otimes\nu)}(x,y)
:=
\exp\!\left(
-\varphi^\star(x)
-\psi^\star(y)
-h^\star(x)\cdot(y-x)
-c(x,y)
\right).
\]
The densities of \(\pi^{n_j+2,n_j+1}\) relative to \(\mu\otimes\nu\)
converge uniformly to this density on \(\mathcal X\times\mathcal Y\).
Its integral is therefore one, so \(\pi^\star\) is a probability
measure. Moreover,
\[
\pi^{n_j+2,n_j+1}\longrightarrow\pi^\star
\]
in total variation.

We verify that \(\pi^\star\in\mathcal{M}(\mu,\nu)\). Since
\(\pi^{n_j+2,n_j+1}\in\mathcal{M}(\mu,\cdot)\), its first marginal is
\(\mu\). Passing to the limit in total variation shows that the first
marginal of \(\pi^\star\) is also \(\mu\).

Moreover, the second marginal of \(\pi^{n_j+2,n_j+1}\) is
\(\nu^{n_j+2,n_j+1}\). By \cref{thm:main-convergence},
\[
H\!\left(\nu^{n_j+2,n_j+1}\mid\nu\right)\longrightarrow 0.
\]
Hence, by Pinsker's inequality,
\[
\nu^{n_j+2,n_j+1}\longrightarrow\nu
\]
in total variation. Since total variation convergence is preserved by
taking marginals, the second marginal of \(\pi^\star\) is \(\nu\).

Finally, for every bounded measurable
\(\zeta:\mathcal{X}\to\mathbb{R}^d\),
\[
\int_{\mathcal{X}\times \mathcal{Y}}
\zeta(x)\cdot(y-x)\,
\pi^{n_j+2,n_j+1}(dx,dy)
=0.
\]
Since \(\mathcal{X}\) and \(\mathcal{Y}\) are compact, the integrand is bounded. Passing to
the limit gives
\[
\int_{\mathcal{X}\times \mathcal{Y}}
\zeta(x)\cdot(y-x)\,
\pi^\star(dx,dy)
=0.
\]
Thus \(\pi^\star\in\mathcal{M}(\mu,\nu)\).

By the definition of \(R\),
\[
\frac{\mathrm{d}\pi^\star}{\mathrm{d}R}(x,y)
=
\exp\!\left(
-\varphi^\star(x)
-\psi^\star(y)
-h^\star(x)\cdot(y-x)
\right).
\]
The potentials are bounded on their compact domains.
Theorem~\ref{thm:general-pythagorean} therefore gives
\(H(\pi^\star\mid R)<\infty\), optimality, and uniqueness.
Mapping back through the affine isometry in
\cref{rem:compact-support-consequences} completes the proof in the
original coordinates.
\end{proof}

\section{A conditional stability estimate}
\label{sec:stability}

In this section we compare two martingale couplings with the same first marginal and
different terminal marginals. The estimate is conditional on density
and potential bounds, rather than on assumptions on \(\mu,\nu,\nu'\)
alone. Only the hypotheses below are used; no convexity of the terminal
support or regularity of a reference cost is required. In
\cref{sec:exponential-convergence}, the potential estimates from
\cref{sec:two-step-estimates} are used to verify these hypotheses uniformly along
the Sinkhorn iteration, providing the bound needed for exponential
convergence.

\begin{assumption}[Density and potential bounds]
\label{ass:stability}
Let \(\mathcal X,\mathcal Y\subset\mathbb R^d\) be compact,
\(\mu\in\mathcal P(\mathcal X)\), and
\(\nu',\nu\in\mathcal P(\mathcal Y)\) satisfy $\mu\preceq_{\mathrm{cx}}\nu'$ and $\mu\preceq_{\mathrm{cx}}\nu$.
Let \(\pi'\in\mathcal M(\mu,\nu')\) and
\(\pi\in\mathcal M(\mu,\nu)\) satisfy the following conditions.
\begin{enumerate}
\item The coupling \(\pi'\) admits a density satisfying
\[
\frac{\mathrm d\pi'}{\mathrm d(\mu\otimes\nu')}
\geq q>0
\qquad\mu\otimes\nu'\text{-almost everywhere},
\]
where \(q\) is a constant. Necessarily \(q\leq1\), since the density
has integral one.
\item There are finite Borel functions \(a,g,b\), with
\[
a\in L^\infty(\mathcal X,\mu),\qquad
g\in L^\infty(\mathcal Y,\nu'),\qquad
b\in L^\infty(\mathcal X,\mu;\mathbb R^d),
\]
such that
\begin{equation}
\begin{aligned}
\frac{\mathrm d\pi}{\mathrm d\pi'}(x,y)&=e^{w(x,y)},\\
w(x,y)&=a(x)+g(y)+b(x)\cdot(y-x),\qquad
\|w\|_{L^\infty(\mathcal X\times\mathcal Y,\pi')}\leq B,
\end{aligned}
\label{eq:bounded-log-comparison}
\end{equation}
where \(0\leq B<\infty\), and the density identity holds
\(\pi'\)-almost everywhere.
\end{enumerate}
\end{assumption}

The exponential comparison implies \(\pi\sim\pi'\), and hence
\(\nu\sim\nu'\).
If the two couplings have bounded potential representations relative to
a common reference probability measure \(R\), the choices
\(a=\varphi'-\varphi\), \(g=\psi'-\psi\), and \(b=h'-h\)
give \cref{eq:bounded-log-comparison}. Their optimality for their
respective terminal marginals then follows from
\cref{thm:general-pythagorean}.

By \cref{ass:stability} both terminal marginals $\nu$ and $\nu'$ have the same mass and barycenter, so their
difference annihilates affine functions. For this reason, the proof of the result of this section
uses an estimate of the potential \(g\) up to an affine function.

\begin{theorem}[Stability in relative entropy]
\label{thm:stability}
Under \cref{ass:stability},
\begin{equation}
H(\pi\mid\pi')+H(\pi'\mid\pi)
\leq\frac{2e^{2B}}{q}H(\nu\mid\nu').
\label{eq:symmetrized-stability-estimate}
\end{equation}
In particular, the same bound holds for \(H(\pi\mid\pi')\).
\end{theorem}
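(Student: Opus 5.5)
The plan is to reduce the symmetrized entropy to a pairing of the terminal potential \(g\) with \(\nu-\nu'\). That pairing is bounded from above by Cauchy--Schwarz, and the symmetrized entropy itself is bounded from below by the squared \(L^2(\nu')\)-distance of \(g\) to affine functions. Comparing the two bounds and converting a \(\chi^2\)-type quantity into \(H(\nu\mid\nu')\) will give the constant \(2e^{2B}/q\).

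\emph{Step 1 (cancellation).} By \eqref{eq:bounded-log-comparison}, \(\pi\sim\pi'\) and \(w\) is bounded, so
\[
S:=H(\pi\mid\pi')+H(\pi'\mid\pi)=\int w\,\mathrm d\pi-\int w\,\mathrm d\pi'=\int w\,(e^{w}-1)\,\mathrm d\pi'.
\]
Both couplings have first marginal \(\mu\), so the \(a\)-terms cancel. The \(b\)-terms vanish by the martingale constraint, since \(b\) is bounded. Hence
\[
S=\int g\,\mathrm d\nu-\int g\,\mathrm d\nu'=\int (g-\ell)(f-1)\,\mathrm d\nu'
\]
for every affine \(\ell\), where \(f:=\mathrm d\nu/\mathrm d\nu'\). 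The subtraction of \(\ell\) is free because \(\nu\) and \(\nu'\) share mass and barycenter: both dominate \(\mu\) in convex order. Let \(d:=\inf_{\ell}\|g-\ell\|_{L^2(\nu')}\), attained by an \(L^2(\nu')\)-projection \(\ell^\ast\). Cauchy--Schwarz then gives
\[
S\le d\,\|f-1\|_{L^2(\nu')}.
\]

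\emph{Step 2 (lower bound via the density floor).} For real \(w\) we have \(w(e^w-1)\ge w^2\min(1,e^w)\), so \(|w|\le B\) gives \(S\ge e^{-B}\int w^2\,\mathrm d\pi'\). By Assumption~\ref{ass:stability}(1), \(\pi'\ge q\,\mu\otimes\nu'\), hence
\[
\int w^2\,\mathrm d\pi'\ge q\int\!\!\int\bigl(g(y)+a(x)+b(x)\cdot(y-x)\bigr)^2\,\nu'(\mathrm dy)\,\mu(\mathrm dx).
\]
For each fixed \(x\), the map \(y\mapsto -a(x)-b(x)\cdot(y-x)\) is affine. The inner integral is therefore at least \(d^2\), and so \(S\ge q e^{-B} d^2\). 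Combining this with Step 1 yields \(S\le d\|f-1\|\le \sqrt{S e^{B}/q}\,\|f-1\|\), that is,
\[
S\le \frac{e^{B}}{q}\,\|f-1\|^2_{L^2(\nu')}.
\]

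\emph{Step 3 (\(\chi^2\) versus entropy).} Disintegrating \(\pi'\) along \(y\), the density \(f(y)=\int e^{w(x,y)}\,\pi'_y(\mathrm dx)\) lies in \([e^{-B},e^{B}]\). The elementary inequality \(t\log t-t+1\ge (t-1)^2/(2\max(t,1))\) then gives
\[
H(\nu\mid\nu')=\int (f\log f-f+1)\,\mathrm d\nu'\ge \frac{e^{-B}}{2}\,\|f-1\|^2_{L^2(\nu')}.
\]
Substituting into Step 2 gives \eqref{eq:symmetrized-stability-estimate}. The bound for \(H(\pi\mid\pi')\) alone follows because the omitted term \(H(\pi'\mid\pi)\) is nonnegative.

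The main obstacle I expect is Step 2: obtaining a lower bound on \(S\) in terms of the distance of \(g\) to affine functions. The key point is that, under the product lower bound \(q\,\mu\otimes\nu'\), the \(x\)-dependent part \(a(x)+b(x)\cdot(y-x)\) is, slice by slice, an affine function of \(y\). It therefore cannot reduce \(w\) below that distance. The remaining points are routine: the integrability and disintegration details, and checking the two scalar inequalities with the stated constants.
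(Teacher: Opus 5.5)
Your proposal is correct and follows essentially the same route as the paper. The steps match: the cancellation reducing the symmetrized entropy to $\int (g-\ell)(f-1)\,\mathrm d\nu'$ followed by Cauchy--Schwarz, the coercivity bound $S\geq qe^{-B}d^2$ from the density floor and the slice-wise affine structure of $a(x)+b(x)\cdot(y-x)$, and the $\chi^2$-to-entropy comparison using $e^{-B}\leq f\leq e^{B}$. Your two scalar inequalities are equivalent forms of the paper's integral-remainder estimates and give the same constant $2e^{2B}/q$.
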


\begin{proof}
\emph{Entropy cancellation.}
Set \(s=\mathrm d\nu/\mathrm d\nu'-1\). For a random pair \((X,Y)\)
with law \(\pi'\), conditioning the density identity  of \cref{eq:bounded-log-comparison} on \(Y\) gives
\begin{equation}
1+s(Y)=\mathbb E_{\pi'}[e^{w(X,Y)}\mid Y],\qquad \text{so that} \quad
e^{-B}\leq1+s\leq e^B\quad\nu'\text{-almost everywhere}.
\label{eq:stability-marginal-ratio}
\end{equation}
Thus \(s\in L^\infty(\mathcal Y,\nu')\), and all entropies in the further calculations are
finite. The two terminal marginals have the same mass and barycenter,
because both couplings are martingales with first marginal \(\mu\).
Consequently,
\[
\int_\mathcal Y s\ell\,\mathrm d\nu'=0
\qquad\text{for every affine function }\ell.
\]
Write \(\mathcal A(\mathcal Y)\) for the restrictions to
\(\mathcal Y\) of affine functions on \(\mathbb R^d\), and set
\[
J:=H(\pi\mid\pi')+H(\pi'\mid\pi),\qquad
d_g:=\inf_{\ell\in\mathcal{A}(\mathcal Y)}
\|g-\ell\|_{L^2(\nu')}.
\]
The boundedness of \(a,g,b\) and the compact supports justify integrating
the three terms in \(w\) separately. The initial-marginal and martingale
constraints for $\pi$ and $\pi'$ cancel the \(a\) and \(b\) terms, so
\begin{align}
J&=\int w\,\mathrm d(\pi-\pi')
 =\int_\mathcal Y g\,\mathrm d(\nu-\nu')\notag\\
 &=\int_\mathcal Y(g-\ell)s\,\mathrm d\nu'
 \leq\|g-\ell\|_{L^2(\nu')}\|s\|_{L^2(\nu')}
 \qquad \text{for} \quad \ell\in\mathcal{A}(\mathcal Y).
\label{eq:entropy-cancellation}
\end{align}
Taking the infimum gives \(J\leq d_g\|s\|_{L^2(\nu')}\).

\emph{Estimate modulo affine functions.}
For every real \(z\) with \(|z|\leq B\) we have
\[
(e^z-1)z=z^2\int_0^1e^{tz}\,\mathrm dt\geq e^{-B}z^2.
\]
Since \(\pi'\sim\mu\otimes\nu'\), the representation of \(w\)
also holds \(\mu\otimes\nu'\)-almost everywhere. Fubini's theorem
and Assumption~\ref{ass:stability}(1) therefore yield
\begin{align}
J&=\int(e^w-1)w\,\mathrm d\pi'
 \geq e^{-B}\int w^2\,\mathrm d\pi'\notag\\
 &\geq e^{-B}q\int_\mathcal X\int_\mathcal Y
 |g(y)+a(x)+b(x)\cdot(y-x)|^2
 \,\nu'(\mathrm dy)\mu(\mathrm dx)\notag\\
 &\geq e^{-B}q\,d_g^2.
\label{eq:entropy-affine-coercivity}
\end{align}
The last step uses that, for \(\mu\)-almost every fixed \(x\), the
function \(y\mapsto-a(x)-b(x)\cdot(y-x)\) is affine.
If \(d_g=0\), the preceding upper bound on \(J\) gives \(J=0\).
Otherwise, combining that bound with
\cref{eq:entropy-affine-coercivity} gives
\(d_g\leq e^Bq^{-1}\|s\|_{L^2(\nu')}\). In both cases,
\begin{equation}
J\leq\frac{e^B}{q}\|s\|_{L^2(\nu')}^2.
\label{eq:stability-chi-square-bound}
\end{equation}

\emph{Comparison with the marginal relative entropy.}
For \(r>-1\), put \(\chi(r)=(1+r)\log(1+r)-r\).
Since \(\chi(0)=\chi'(0)=0\) and \(\chi''(r)=(1+r)^{-1}\),
Taylor's formula gives
\[
\chi(r)=r^2\int_0^1\frac{1-t}{1+tr}\,\mathrm dt
\geq\frac{e^{-B}}2r^2
\qquad\bigl(-1<r\leq e^B-1\bigr).
\]
Indeed, \(0<1+tr\leq e^B\) along the integration segment.
Using \cref{eq:stability-marginal-ratio} and
\(\int s\,\mathrm d\nu'=0\), we obtain
\[
H(\nu\mid\nu')=\int_\mathcal Y\chi(s)\,\mathrm d\nu'
\geq\frac{e^{-B}}2\|s\|_{L^2(\nu')}^2.
\]
Together with \cref{eq:stability-chi-square-bound}, this proves
\cref{eq:symmetrized-stability-estimate}. The one-sided bound follows
from \(H(\pi'\mid\pi)\geq0\).
\end{proof}

\section{Exponential convergence}
\label{sec:exponential-convergence}

The remaining step is to make the constants provided by \cref{thm:stability} for the Sinkhorn iterates
uniform along the iteration. We take \(\pi'=\pi^{n+1,n}\) and
\(\pi=\pi^\star\), so that the marginal entropy in the stability
estimate is the term identified in \cref{eq:exact-marginal-decrement}.
Once the density and log-density ratio bounds are verified, the entropy
decreases by a fixed fraction over every full iteration.

For the density lower bound, the product measure to consider is
\(\mu\otimes\nu^{n+1,n}\), not \(\mu\otimes\nu\). Passing from one to
the other replaces \(\psi^{n,[n]}\) by \(\psi^{n+1,[n]}\) in the
exponential representation of the density.

\begin{proposition}[Verification of the stability assumptions]
\label{prop:verify-stability-conditions}
Under Assumption~\ref{ass:main} and \cref{ass:two-step}, for every
\(n\geq0\) the choices
\[
\pi'=\pi^{n+1,n},\qquad
\nu'=\nu^{n+1,n},\qquad
\pi=\pi^\star
\]
satisfy \cref{ass:stability} with constants \(q>0\) and \(0\leq B<\infty\)
independent of \(n\).
\end{proposition}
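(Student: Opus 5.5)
We work throughout in a fixed good gauge \(\ell_n\) from \cref{def:good-gauges} and write the dual variables of \(\pi^\star\) in the same gauge. Gauge changes do not alter the densities, by \cref{eq:affine-gauge-invariance}. The key input is \cref{prop:uniform-two-step}. It bounds \(\|\varphi^{n+1,[n]}\|_{C(\mathcal X)}\), \(\|h^{n+1,[n]}\|_{C(\mathcal X)}\) and \(\|\psi^{n+1,[n]}\|_{C(\mathcal Y)}\) by a constant \(C\) independent of \(n\). We also have \(\|\psi^{n,[n]}\|_{C(\mathcal Y)}\leq C_\psi\). The compact setting then gives \(\|c\|_{C(\mathcal X\times\mathcal Y)}<\infty\), and \(\operatorname{diam}(\mathcal Y)<\infty\) controls \(h\cdot(y-x)\).

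\emph{Density lower bound (condition 1).} By \cref{eq:intermediate-marginal-density}, \(\mathrm d\nu^{n+1,n}/\mathrm d\nu=e^{\psi^{n+1}-\psi^n}\). Hence
\[
\frac{\mathrm d\pi^{n+1,n}}{\mathrm d(\mu\otimes\nu^{n+1,n})}(x,y)
=\exp\bigl(-\varphi^{n+1,[n]}(x)-\psi^{n+1,[n]}(y)-h^{n+1,[n]}(x)\cdot(y-x)-c(x,y)\bigr).
\]
The gauge terms cancel because \(\psi^{n+1}-\psi^n=\psi^{n+1,[n]}-\psi^{n,[n]}\). Every term in the exponent is bounded uniformly in \(n\), so
\[
q:=\exp\bigl(-2C-C\operatorname{diam}(\mathcal Y)-\|c\|_{C(\mathcal X\times\mathcal Y)}\bigr)
\]
works. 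The equivalence \(\nu^{n+1,n}\sim\nu\) ensures that the bound holds \(\mu\otimes\nu^{n+1,n}\)-a.e. The convex-order requirements hold because both couplings are martingales with first marginal \(\mu\).

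\emph{Log-density comparison (condition 2).} Write \(\pi^\star\) with potentials \((\varphi^\star,\psi^\star,h^\star)\) from \cref{thm:optimizer_representation}. Then \(\mathrm d\pi^\star/\mathrm d\pi^{n+1,n}=e^{w}\) with
\[
a=\varphi^{n+1}-\varphi^\star,\qquad g=\psi^n-\psi^\star,\qquad b=h^{n+1}-h^\star .
\]
Here \(w\) is gauge invariant, so we may evaluate it in the gauge \(\ell_n\), where \(a,g,b\) become \(\varphi^{n+1,[n]}-\varphi^{\star,\ell_n}\), and so on. The difficulty is that \(\varphi^\star,\psi^\star,h^\star\) are fixed while \(\ell_n\) varies. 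We therefore must not gauge \(\pi^\star\) by \(\ell_n\) as well.

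The \(\ell_n\)-dependence is still harmless. Write \(\ell_n(y)=\alpha_n+\beta_n\cdot y\). In the exponent of \(\mathrm d\pi^{n+1,n}/\mathrm dR\), the gauged and ungauged potentials differ by \(\alpha_n+\beta_n\cdot y\) on the \(\psi\) side and by the compensating \(\alpha_n+\beta_n\cdot x+\beta_n\cdot(y-x)\) on the \(\varphi,h\) side. The combination \(w\) therefore equals
\[
w=\bigl(\varphi^{n+1,[n]}-\varphi^\star\bigr)(x)+\bigl(\psi^{n,[n]}-\psi^\star\bigr)(y)+\bigl(h^{n+1,[n]}-h^\star\bigr)(x)\cdot(y-x),
\]
with all three gauged iterate pieces bounded uniformly in \(n\), and the \(\star\)-potentials bounded since they are Lipschitz on compact sets. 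This yields
\[
B:=2C+2\|\varphi^\star\|_{C(\mathcal X)}+C_\psi+\|\psi^\star\|_{C(\mathcal Y)}+\bigl(C+\|h^\star\|_{C(\mathcal X)}\bigr)\operatorname{diam}(\mathcal Y),
\]
with \(C\) enlarged if necessary to dominate \(\|\varphi^{n+1,[n]}\|_{C(\mathcal X)}\).

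The main obstacle I expect is exactly this gauge bookkeeping: verifying that one may use the gauged iterate potentials against ungauged \(\star\) potentials with a uniform bound. I would check it directly from \cref{eq:affine-gauge-invariance}, applied with \(k=n+1\) and \(m=n\). A secondary point is confirming that the \(\mathcal X\)-restriction identities hold \(\mu\)-a.e., which is immediate from continuity.
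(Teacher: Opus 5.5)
Your proposal is correct and follows the paper's proof essentially verbatim. The lower bound \(q\) comes from rewriting \(\mathrm d\pi^{n+1,n}/\mathrm d(\mu\otimes\nu^{n+1,n})\) with \(\psi^{n+1,[n]}\) via \cref{eq:intermediate-marginal-density} and \cref{prop:uniform-two-step}. \(B\) comes from pairing the iterate potentials in the good gauge \(\ell_n\) with the fixed, ungauged optimizer potentials, i.e.\ exactly the paper's choice \(a=\varphi^{n+1,[n]}-\varphi^\star\), \(g=\psi^{n,[n]}-\psi^\star\), \(b=h^{n+1,[n]}-h^\star\).

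The only slip is your opening sentence, which says to write \(\pi^\star\)'s potentials in the gauge \(\ell_n\) too. That grouping produces the pieces \(\varphi^{n+1}-\varphi^\star\), etc., which are not controlled uniformly in \(n\). Your later computation rightly avoids it, so delete that clause.
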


\begin{proof}
Fix \(n\geq0\) and use the good gauge \(\ell_n\) from
\cref{def:good-gauges}.
By this choice, \(\psi^{n,[n]}\) is uniformly bounded, while
\(\varphi^{n+1,[n]},h^{n+1,[n]},\psi^{n+1,[n]}\) are uniformly
bounded by \cref{prop:uniform-two-step}.
By \cref{eq:intermediate-marginal-density},
\(\nu^{n+1,n}\sim\nu\), and
\[
\begin{aligned}
\frac{\mathrm d\pi^{n+1,n}}
{\mathrm d(\mu\otimes\nu^{n+1,n})}(x,y)
=\exp\Bigl(&-\varphi^{n+1,[n]}(x)-\psi^{n+1,[n]}(y)\\
&-h^{n+1,[n]}(x)\cdot(y-x)-c(x,y)\Bigr).
\end{aligned}
\]
Since \(c\) is bounded and the supports are compact, this density has
a lower bound \(q>0\) independent of \(n\).

Fix the optimizer potentials from \cref{thm:optimizer_representation}
and, for this \(n\), take
\[
a=\varphi^{n+1,[n]}-\varphi^\star,\qquad
g=\psi^{n,[n]}-\psi^\star,\qquad
b=h^{n+1,[n]}-h^\star.
\]
Both couplings are equivalent to \(R\), and their log-density ratio is
\[
\log\frac{\mathrm d\pi^\star}{\mathrm d\pi^{n+1,n}}(x,y)
=a(x)+g(y)+b(x)\cdot(y-x).
\]
The functions \(a,g,b\) are uniformly bounded. Compactness gives
\[
\left\|\log\frac{\mathrm d\pi^\star}{\mathrm d\pi^{n+1,n}}\right\|_
{L^\infty(\mathcal X\times\mathcal Y,\pi^{n+1,n})}\leq B<\infty
\]
with \(B\) independent of \(n\). This verifies \cref{ass:stability}.
\end{proof}

We now combine the uniform stability estimate with the full-step
entropy decrement.

\begin{proof}[Proof of \cref{thm:exponential-halfstep}]
By \cref{thm:optimizer_representation},
\(H(\pi^\star\mid R)<\infty\), so
\cref{prop:half-step-identities} applies with \(\bar\pi=\pi^\star\).
Combining its full-step decrement and exact marginal identity with
\cref{thm:stability,prop:verify-stability-conditions} yields
\[
\begin{aligned}
&H(\pi^\star\mid\pi^{n+1,n})
 -H(\pi^\star\mid\pi^{n+2,n+1})\\
&\qquad\geq H(\nu\mid\nu^{n+1,n})
 \geq\frac{q}{2e^{2B}}H(\pi^\star\mid\pi^{n+1,n}).
\end{aligned}
\]
Since \(0<q\leq1\) and \(B\geq0\), the number
\[
\alpha:=1-\frac{q}{2e^{2B}}
\]
belongs to \((0,1)\), and
\[
H(\pi^\star\mid\pi^{n+2,n+1})
\leq\alpha H(\pi^\star\mid\pi^{n+1,n}).
\]
Iteration proves \cref{eq:exponential-halfstep}.
\end{proof}

\appendix

\section{Auxiliary proofs}
\label{app:auxiliary-proofs}

The following lemma gives the uniform ball-mass and covariance bounds
used in \cref{sec:two-step-estimates}.

\begin{lemma}[Nondegeneracy from compact full support]
\label{lem:compact-support-nondegeneracy}
Let \(\eta\in\mathcal P(\mathbb R^d)\) and let
\(S:=\operatorname{supp}\eta\) be compact. Then, for every fixed \(r>0\),
\[
m_\eta(r):=\inf_{z\in S}\eta(B(z,r))>0.
\]
If, in addition, \(\operatorname{aff}(S)=\mathbb R^d\), then
\[
\sigma_\eta:=
\lambda_{\min}\!\left(
\int_S (z-\bar z_\eta)(z-\bar z_\eta)^\top\,\eta(\mathrm dz)
\right)>0,
\qquad \text{where }
\bar z_\eta:=\int_S z\,\eta(\mathrm dz),
\]
and $\lambda_{\min}$ denotes the minimal eigenvalue of the matrix.
\end{lemma}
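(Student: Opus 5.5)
The plan has two independent parts, one for the ball-mass bound and one for the covariance bound.

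\emph{Ball mass.} We argue by contradiction using compactness of \(S\). Suppose \(m_\eta(r)=0\). Then there are points \(z_k\in S\) with \(\eta(B(z_k,r))\to0\). By compactness we pass to a subsequence \(z_k\to z\in S\). For \(k\) large, \(|z_k-z|<r/2\), so \(B(z,r/2)\subset B(z_k,r)\), and therefore \(\eta(B(z,r/2))\leq\eta(B(z_k,r))\to0\). This forces \(\eta(B(z,r/2))=0\). But \(z\in\operatorname{supp}\eta\) means every open ball centred at \(z\) has positive mass, which is a contradiction. (Whether \(B\) denotes the open or closed ball does not matter, since the closed ball of radius \(r/2\) sits inside the open ball of radius \(r\).)

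\emph{Covariance.} The covariance matrix is positive semidefinite, so it suffices to exclude a unit vector \(e\) with
\[
e^\top\Bigl(\int_S(z-\bar z_\eta)(z-\bar z_\eta)^\top\,\eta(\mathrm dz)\Bigr)e=\int_S\bigl(e\cdot(z-\bar z_\eta)\bigr)^2\,\eta(\mathrm dz)=0.
\]
The matrix is finite because \(S\) is compact, so \(\bar z_\eta\) exists and the minimum eigenvalue is attained at some unit vector.
\begin{enumerate}
\item If this integral vanishes, then \(e\cdot(z-\bar z_\eta)=0\) for \(\eta\)-almost every \(z\). The set \(H:=\{z: e\cdot(z-\bar z_\eta)=0\}\) is therefore a closed set of full \(\eta\)-measure.
\item The support is the smallest closed set of full measure, so \(S\subset H\).
\item Then \(\operatorname{aff}(S)\subset H\), a proper affine hyperplane. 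This contradicts \(\operatorname{aff}(S)=\mathbb R^d\).
\end{enumerate}
Hence the quadratic form is strictly positive on the unit sphere, and so \(\sigma_\eta>0\).

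Neither step presents a real obstacle. The only point needing care is the measure-theoretic fact that a closed set of full measure contains the support, which follows directly from the definition of support.
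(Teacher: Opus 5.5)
Your proof is correct and matches the paper's. The covariance part is identical: a null direction $e$ of the covariance matrix would confine $S$ to the hyperplane $\{z: e\cdot(z-\bar z_\eta)=0\}$, since the support lies in every closed set of full measure. Your ball-mass argument is the sequential-compactness version of the paper's finite-cover argument, and it rests on the same inclusion $B(z,r/2)\subset B(z_k,r)$ whenever $|z-z_k|<r/2$.
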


\begin{proof}
Fix \(r>0\). By compactness, finitely many balls
\(B(z_i,r/2)\), with \(z_i\in S\), cover \(S\).
Since \(z_i\in\operatorname{supp}\eta\),
\(\eta(B(z_i,r/2))>0\) for every \(i\). For any \(z\in S\), choose
\(i\) with \(|z-z_i|<r/2\). Then \(B(z_i,r/2)\subset B(z,r)\), and hence
\[
\eta(B(z,r))
\geq
\min_i\eta(B(z_i,r/2))
>0.
\]
Taking the infimum over \(z\in S\) proves \(m_\eta(r)>0\).

Assume now that \(\operatorname{aff}(S)=\mathbb R^d\).
The covariance matrix is finite because \(S\) is compact. If it were not
positive definite, there would exist \(v\neq0\) such that
\[
\int_S \bigl(v\cdot(z-\bar z_\eta)\bigr)^2\,\eta(\mathrm dz)=0.
\]
Thus \(v\cdot z=v\cdot\bar z_\eta\) for \(\eta\)-almost every \(z\).
The function \(z\mapsto v\cdot z-v\cdot\bar z_\eta\) is continuous, and
\(S=\operatorname{supp}\eta\); therefore it vanishes on all of \(S\).
Hence \(S\) is contained in an affine hyperplane, contradicting
\(\operatorname{aff}(S)=\mathbb R^d\). Thus the covariance matrix is
positive definite and \(\sigma_\eta>0\).
\end{proof}

% Embedded bibliography: compile this file directly with pdfLaTeX.


\begin{thebibliography}{24}

\bibitem{AlfonsiCoyaudEhrlacherLombardi2021MomentConstraintsOT}
Aur\'elien Alfonsi, Rafa\"el Coyaud, Virginie Ehrlacher, and Damiano Lombardi.
\newblock Approximation of optimal transport problems with marginal moments constraints.
\newblock \emph{Mathematics of Computation}, 90(328):689--737, 2021.

\bibitem{BackhoffVeraguasPammer2022StabilityMOTWeakOT}
Julio Backhoff-Veraguas and Gudmund Pammer.
\newblock Stability of martingale optimal transport and weak optimal transport.
\newblock \emph{The Annals of Applied Probability}, 32(1):721--752, 2022.

\bibitem{BeiglbockHenryLaborderePenkner2013ModelIndependentBounds}
Mathias Beiglb\"ock, Pierre Henry-Labord\`ere, and Friedrich Penkner.
\newblock Model-independent bounds for option prices---a mass transport approach.
\newblock \emph{Finance and Stochastics}, 17(3):477--501, 2013.

\bibitem{BenamouChazareixHoffmannLoeperVialard2024EntropicSemiMartingaleOT}
Jean-David Benamou, Guillaume Chazareix, Marc Hoffmann, Gr\'egoire Loeper, and Fran\c{c}ois-Xavier Vialard.
\newblock Entropic semi-martingale optimal transport.
\newblock arXiv preprint arXiv:2408.09361, 2024.

\bibitem{CarlierMalamutSylvestre2025WeakOTMomentConstraints}
Guillaume Carlier, Hugo Malamut, and Maxime Sylvestre.
\newblock Weak optimal transport with moment constraints: constraint qualification, dual attainment and entropic regularization.
\newblock arXiv preprint arXiv:2511.16211, 2025.

\bibitem{ChenConfortiRenWang2026SinkhornEMOT}
Fan Chen, Giovanni Conforti, Zhenjie Ren, and Xiaozhen Wang.
\newblock Convergence of Sinkhorn's algorithm for entropic martingale optimal transport problem.
\newblock \emph{Mathematics of Operations Research}, 2026.
\newblock doi: \href{https://doi.org/10.1287/moor.2024.0619}{10.1287/moor.2024.0619}.

\bibitem{chiarini2024semiconcavity}
Alberto Chiarini, Giovanni Conforti, Giacomo Greco, and Luca Tamanini.
\newblock A semiconcavity approach to stability of entropic plans and exponential convergence of Sinkhorn's algorithm.
\newblock arXiv preprint arXiv:2412.09235, 2024. Accepted in The Annals of Probability.

\bibitem{DeMarch2018EntropicApproximationMOT}
Hadrien De March.
\newblock Entropic approximation for multi-dimensional martingale optimal transport.
\newblock arXiv preprint arXiv:1812.11104, 2018.

\bibitem{DeMarchHenryLabordere2019ArbitrageFreeImpliedVolatility}
Hadrien De March and Pierre Henry-Labord\`ere.
\newblock Building arbitrage-free implied volatility: Sinkhorn's algorithm and variants.
\newblock arXiv preprint arXiv:1902.04456, 2019.

\bibitem{DoldiFrittelli2023EMOT}
Alessandro Doldi and Marco Frittelli.
\newblock Entropy martingale optimal transport and nonlinear pricing--hedging duality.
\newblock \emph{Finance and Stochastics}, 27(2):255--304, 2023.

\bibitem{DoldiFrittelliRosazzaGianin2024EntropyMOTTheory}
Alessandro Doldi, Marco Frittelli, and Emanuela Rosazza Gianin.
\newblock On entropy martingale optimal transport theory.
\newblock \emph{Decisions in Economics and Finance}, 47(1):1--42, 2024.

\bibitem{EcksteinGuoLimObloj2021RobustPricingMultipleAssets}
Stephan Eckstein, Gaoyue Guo, Tongseok Lim, and Jan Ob\l\'oj.
\newblock Robust pricing and hedging of options on multiple assets and its numerics.
\newblock \emph{SIAM Journal on Financial Mathematics}, 12(1):158--188, 2021.

\bibitem{eckstein2025exponential}
Stephan Eckstein and Aziz Lakhal.
\newblock Exponential convergence of general iterative proportional fitting procedures.
\newblock \emph{SIAM Journal on Optimization}, 36(2):912--937, 2026.
\newblock doi: \href{https://doi.org/10.1137/25M1752584}{10.1137/25M1752584}.

\bibitem{GalichonHenryLabordereTouzi2014StochasticControl}
Alfred Galichon, Pierre Henry-Labord\`ere, and Nizar Touzi.
\newblock A stochastic control approach to no-arbitrage bounds given marginals, with an application to lookback options.
\newblock \emph{The Annals of Applied Probability}, 24(1):312--336, 2014.

\bibitem{GuoObloj2019ComputationalMOT}
Gaoyue Guo and Jan Ob\l\'oj.
\newblock Computational methods for martingale optimal transport problems.
\newblock \emph{The Annals of Applied Probability}, 29(6):3311--3347, 2019.

\bibitem{Guyon2024DispersionConstrainedMartingaleSchrodinger}
Julien Guyon.
\newblock Dispersion-constrained martingale Schr\"odinger problems and the exact joint S\&P 500/VIX smile calibration puzzle.
\newblock \emph{Finance and Stochastics}, 28(1):27--79, 2024.

\bibitem{HenryLabordere2019MartingaleSchrodingerBridges}
Pierre Henry-Labord\`ere.
\newblock From (martingale) Schr\"odinger bridges to a new class of stochastic volatility models.
\newblock arXiv preprint arXiv:1904.04554, 2019.

\bibitem{HiewNennaPass2025ODEEntropicOT}
Joshua Zoen-Git Hiew, Luca Nenna, and Brendan Pass.
\newblock An ordinary differential equation for entropic optimal transport and its linearly constrained variants.
\newblock \emph{Numerische Mathematik}, 157(6):2211--2249, 2025.

\bibitem{NutzWiesel2024MartingaleSchrodingerBridgeTwoDistributions}
Marcel Nutz and Johannes Wiesel.
\newblock On the martingale Schr\"odinger bridge between two distributions.
\newblock arXiv preprint arXiv:2401.05209, 2024. Accepted in Bernoulli.

\bibitem{NutzWieselZhao2023MartingaleSchrodingerBridges}
Marcel Nutz, Johannes Wiesel, and Long Zhao.
\newblock Martingale Schr\"odinger bridges and optimal semistatic portfolios.
\newblock \emph{Finance and Stochastics}, 27(1):233--254, 2023.

\bibitem{Strassen1965}
Volker Strassen.
\newblock The existence of probability measures with given marginals.
\newblock \emph{The Annals of Mathematical Statistics}, 36(2):423--439, 1965.

\bibitem{TanTouzi2013OptimalTransportationControlledDynamics}
Xiaolu Tan and Nizar Touzi.
\newblock Optimal transportation under controlled stochastic dynamics.
\newblock \emph{The Annals of Probability}, 41(5):3201--3240, 2013.

\bibitem{TangShavlovskyRahmanianXiaoYing2025EntropicOTMartingaleType}
Xun Tang, Michael Shavlovsky, Holakou Rahmanian, Tesi Xiao, and Lexing Ying.
\newblock An efficient algorithm for entropic optimal transport under martingale-type constraints.
\newblock arXiv preprint arXiv:2508.17641, 2025.

\bibitem{Wiesel2023ContinuityMOTRealLine}
Johannes Wiesel.
\newblock Continuity of the martingale optimal transport problem on the real line.
\newblock \emph{The Annals of Applied Probability}, 33(6A):4645--4692, 2023.



\end{thebibliography}
\end{document}